\newtheorem{theorem}{Theorem}[section]
\newtheorem{lemma}[theorem]{Lemma}
\newtheorem{prop}[theorem]{Proposition}
\theoremstyle{definition}
\theoremstyle{remark}
\newtheorem{remark}[theorem]{Remark}
\newcommand{\mysection}[1]{\section{#1}
\setcounter{equation}{0}}
\newcommand{\bR}{\mathbb R}
\DeclareMathOperator{\esssup}{ess\,sup}
\renewcommand{\epsilon}{\varepsilon}
\begin{document}
\title[The Navier-Stokes equations] {The Navier-Stokes equations in the critical Lebesgue space}

\author[H. Dong]{Hongjie Dong}
\address[H. Dong]{Division of Applied Mathematics, Brown University, 182 George Street, Box F, Providence, RI 02912, USA}
\email{Hongjie\_Dong@brown.edu}
\thanks{Hongjie Dong was partially supported by the National Science Foundation under agreement No. DMS-0111298 and DMS-0800129. Dapeng Du was partially supported by China Postdoctor Science Fund CPSF 20070410683.}

\author[D. Du]{Dapeng Du}
\address[D. Du]{School of Mathematical Sciences,
Fudan University, Shanghai 200433, P.R. China}
\email{dpdu@fudan.edu.cn}

\date{\today}

\subjclass{35Q30, 76D03, 76D05}

\keywords{Navier-Stokes equations, regularity criterion}

\begin{abstract}
We study regularity criteria for the $d$-dimensional incompressible Navier-Stokes equations. We prove in this paper that if  $u\in L_\infty^tL_{d}^x((0,T)\times \bR^d)$ is a Leray-Hopf weak solution, then $u$ is smooth and unique in $(0,T)\times \bR^d$. This generalizes a result by Escauriaza, Seregin and \v Sver\'ak \cite{ESS}. Additionally, we show that if $T=\infty$ then $u$ goes to zero as $t$ goes to infinity.
\end{abstract}

\maketitle

\mysection{Introduction}

In this paper we consider the incompressible Navier-Stokes
equations in $d$ spatial dimensions with unit viscosity and
zero external force:
\begin{equation}
                            \label{NSE}
\partial_t u+u\cdot\nabla u-\Delta u+\nabla p=0,\quad \text{div}\, u=0
\end{equation}
for $x\in\bR^d$ and $t\geq 0$ with the initial condition
\begin{equation}
                                \label{NSE2}
u(0,x)=a(x),\quad x\in\bR^d.
\end{equation}
Here $u$ is the velocity and $p$ is the pressure.

For sufficiently regular data $a$, the local
strong solvability of such problems is well known (see, for example,
\cite{kato}, \cite{giga2}, \cite{Taylor} and \cite{tataru}). The solution is unique and locally smooth in both spatial and time variables. On the other hand, the global in time strong solvability is an outstanding open problem for $d\ge 3$.

Another important type of solutions are called {\em Leray-Hopf} weak solutions (see Section \ref{sec2.1} for the notation and definition).
In the pioneering works of Leray \cite{leray} and Hopf \cite{Hopf}, it is shown that for any divergence-free vector field $a\in L_2$, there exists at least one Leray-Hopf weak solution of the Cauchy problem \eqref{NSE}-\eqref{NSE2} on $(0,\infty)\times\bR^d $. Although the problems of uniqueness and regularity of Leray-Hopf weak solutions are still open, since the seminal work of Leray there is an extensive literature on conditional results under various criteria. The most well-known condition is so-called Ladyzhenskaya-Prodi-Serrin condition, that is for some $T>0$
\begin{equation}
                                    \label{eq10.56}
u\in L_r^tL_q^x(\bR^{d+1}_T),
\end{equation}
where the pair $(r,q)$ satisfies
\begin{equation*}
\frac 2 r+\frac d q\le 1,\quad q\in (d,\infty].
\end{equation*}
Under the condition \eqref{eq10.56}, the uniqueness of Leray-Hopf weak solutions was proved by Prodi \cite{Prodi} and Serrin \cite{Serrin62}, and the smoothness was obtained by Ladyzhenskaya \cite{Lady}. For further results, we refer the reader to \cite{giga86}, \cite{Serrin63}, \cite{Struwe} and recent \cite{CS}, and references therein. The borderline case $(r,q)=(\infty,d)$ is much more subtle since the result cannot be proved by usual methods using the local smallness of certain norms of $u$ which are invariant under the natural scaling
\begin{equation}
					\label{eq11.35}
u(t,x)\to \lambda u(\lambda^2 t,\lambda x),\quad p(t,x)\to \lambda^2 p(\lambda^2 t,\lambda x).
\end{equation}
For $d=3$, this case was studied recently by Escauriaza, Seregin and \v Sver\'ak in a remarkable paper \cite{ESS}. The main result of \cite{ESS} is the following theorem.

\begin{theorem}[Escauriaza, Seregin and \v Sver\'ak]
                                        \label{thm1.1}
Let $d=3$. Suppose that $u$ is a Leray-Hopf weak solution of the Cauchy problem \eqref{NSE}-\eqref{NSE2} in $(0,T)\times \bR^3$ and $u$ satisfies the condition \eqref{eq10.56} with $(r,q)=(\infty,3)$. Then $u\in L_5((0,T)\times \bR^3)$, and hence it is smooth and unique in $(0,T)\times \bR^3$.
\end{theorem}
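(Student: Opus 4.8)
The plan is to prove Theorem~\ref{thm1.1} by contradiction, through a blow-up (zoom-in) procedure around a hypothetical singular point combined with a Liouville-type rigidity statement for the resulting limit solution, following the scheme of \cite{ESS}. Suppose $u$ is not smooth in $(0,T)\times\bR^3$. By the partial regularity theory of Caffarelli--Kohn--Nirenberg there is then a first singular time $t_0\in(0,T)$ and a singular point $(x_0,t_0)$; after a translation we may take $x_0=0$, so that $u$ is smooth on $(0,t_0)\times\bR^3$. Since $\esssup_t\|u(t,\cdot)\|_{L^3(\bR^3)}$ is invariant under the scaling \eqref{eq11.35}, for each $\lambda>0$ the rescaled field $u^\lambda(s,y):=\lambda u(t_0+\lambda^2 s,\lambda y)$, defined for $-t_0/\lambda^2<s<0$, is again a Leray--Hopf weak solution with the same $L_\infty^tL_3^x$ norm, and (the $L^3$ bound making the local energy inequality available) is a suitable weak solution whose scale-invariant local quantities are bounded uniformly in $\lambda$ by the $L^3$ bound together with the global energy inequality.

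First I would construct the blow-up limit. Using these uniform bounds together with the $\epsilon$-regularity criterion, the $u^\lambda$ are smooth with uniform interior estimates away from a set of small parabolic measure, and the associated pressures can be controlled under rescaling; sending $\lambda=\lambda_k\downarrow 0$ along a subsequence one extracts a limit $v$ on $(-\infty,0)\times\bR^3$ that is a bounded mild ancient solution of \eqref{NSE}, smooth for $s<0$, with $\esssup_{s<0}\|v(s,\cdot)\|_{L^3(\bR^3)}<\infty$. Because $(0,t_0)$ is a singular point of $u$, the scale-invariant local quantities of $u^\lambda$ near the space-time origin stay bounded below uniformly in $\lambda$ (otherwise $\epsilon$-regularity would make $(0,t_0)$ a regular point), so $v\not\equiv 0$. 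The decisive observation is that the trace of $v$ at the final time vanishes: writing $u(t_0,\cdot)\in L^3(\bR^3)$ for its (weak-$L^3$) trace at $t_0$, one has $\|u^\lambda(0,\cdot)\|_{L^3(B_R)}=\|u(t_0,\cdot)\|_{L^3(B_{\lambda R})}\to 0$ as $\lambda\to 0$ by absolute continuity of the integral, so $v(0,\cdot)\equiv 0$ on $\bR^3$.

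With the limit in hand, the contradiction comes from two unique-continuation results applied to the vorticity $w:=\nabla\times v$, which on any region where $v$ and $\nabla v$ are bounded satisfies the pointwise inequality $|\partial_s w-\Delta w|\le C(|w|+|\nabla w|)$ with $C$ depending only on those bounds. A backward-uniqueness Carleman estimate for the heat operator with a first-order lower-order perturbation, applied on $(\bR^3\setminus B_R)\times(-1,0)$ --- where $v$ and $\nabla v$ are bounded, $w$ has the growth control provided by the $L^\infty$ and $L^3$ bounds, and $w(0,\cdot)=0$ --- gives $w\equiv 0$ on $(\bR^3\setminus B_R)\times(-1,0)$; a quantitative spatial unique-continuation Carleman estimate then propagates this vanishing inward, so that $w\equiv 0$ on $\bR^3\times(s_1,0)$ for some $s_1\in(-1,0)$. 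On that slab $v$ is divergence-free and curl-free, hence each component is bounded and harmonic on $\bR^3$, so constant by Liouville and therefore zero since $v(s,\cdot)\in L^3$; finally backward uniqueness for the Stokes system (equivalently, analyticity in time of the mild solution) upgrades this to $v\equiv 0$ on $(-\infty,0)\times\bR^3$, contradicting $v\not\equiv 0$. Hence $u$ has no singular point in $(0,T)\times\bR^3$ and is smooth there, from which the remaining assertions --- $u\in L_5((0,T)\times\bR^3)$ and uniqueness within the Leray--Hopf class --- follow from the local regularity estimates, the energy bounds, and the classical Prodi--Serrin argument \cite{Serrin62}.

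The main obstacle is twofold. The genuinely hard analytic input is the pair of Carleman inequalities --- backward uniqueness for the heat operator with a first-order perturbation on an exterior domain, and quantitative spatial unique continuation for the associated parabolic differential inequality --- both of which require delicately chosen weight functions and form the technical core of the argument. The second, more structural, difficulty is producing the blow-up limit as a genuinely \emph{bounded} and smooth ancient solution: this needs the full Caffarelli--Kohn--Nirenberg partial-regularity machinery, a uniform $\epsilon$-regularity criterion, and control of the nonlocal pressure under rescaling, so that $v$ is regular enough for the vorticity inequality and the Liouville step to be legitimate. It is here that the scaling-criticality of the $L^3$ norm is essential: without it one could not conclude $v(0,\cdot)\equiv 0$, and the backward-uniqueness step would have nothing to start from.
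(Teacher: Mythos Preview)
Your proposal is correct and follows the original scheme of \cite{ESS}. The paper does not give a separate proof of Theorem~\ref{thm1.1}; it proves the $d$-dimensional generalization Theorem~\ref{thm1}, which reduces to Theorem~\ref{thm1.1} when $d=3$. The overall architecture---blow up at a hypothetical first singular point, extract an ancient limiting solution with vanishing trace at $t=0$, apply backward uniqueness to the vorticity in an exterior region, propagate the vanishing inward, and conclude the limit is identically zero for a contradiction---is the same in both.

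There are two genuine technical differences. First, where you invoke the CKN-type $\epsilon$-regularity criterion (scale-invariant quantities small $\Rightarrow$ local H\"older continuity), the paper explicitly notes that this criterion is unavailable for $d\ge 4$ and substitutes a weaker statement proved via Schoen's trick (Theorem~\ref{thm4.1}): if the solution is \emph{already smooth}, the $L^t_\infty L^x_d$ norm is bounded, and $C(\rho)$ is small on all subcylinders, then one gets an a priori $L_\infty$ bound depending only on $d$ and $K$. This suffices to pass bounds to the limit $u_\infty$ and is precisely what makes the argument dimension-independent. Second, for the inward propagation you use a Carleman spatial unique-continuation estimate, while the paper uses spatial analyticity of strong $L_d$ solutions (Proposition~\ref{prop3.53}) together with weak--strong uniqueness: once $\omega\equiv 0$ outside $B_{R+3}$, analyticity forces $\omega\equiv 0$ on the whole slice for a dense set of times, and weak continuity finishes. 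Your route is the original ESS one; the paper's two modifications are exactly what buys the extension to all $d\ge 3$.

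One small imprecision in your outline: you describe the blow-up limit as a globally ``bounded mild ancient solution'' before the backward uniqueness step, but boundedness is initially established only outside a large spatial ball (this is where either the $\epsilon$-regularity criterion or the paper's Theorem~\ref{thm4.1} is applied, using that $\|u_\infty\|_{L_d}$ is small at spatial infinity). The differential inequality for $\omega$ and the backward uniqueness theorem are invoked only on that exterior region, so global boundedness is neither needed nor available at that stage.
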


Before we give a description of Theorem \ref{thm1.1}, we shall recall another important concept, the partial regularity of weak solutions. The study of partial regularity of the Navier-Stokes equations was originated by Scheffer in a series of papers \cite{Sch1, Sch2, Sch4}. In three space dimensions, he established various partial regularity results for weak solutions satisfying the so-called local energy inequality.
For $d=3$, the notion of {\em suitable weak solutions} was introduced in a celebrated paper \cite{CKN} by Caffarelli,
Kohn and Nirenberg. They called a pair $(u,p)$ a suitable weak solution if $u$ has finite energy
norm, $p$ belongs to the Lebesgue space $L_{5/4}$, $u$ and $p$ are weak solutions to the Navier-Stokes equations and satisfy a local energy inequality. It is proved
that, for any suitable weak solution $(u,p)$, there is an open subset in which the velocity field $u$ is H\"older continuous, and the complement of it has zero 1-D Hausdorff measure. In \cite{flin}, with zero external force, Lin gave a more direct and sketched proof of Caffarelli, Kohn and Nirenberg's result. A detailed treatment was then later given by Ladyzhenskaya and Seregin in \cite{OL2}. For other results in this direction, we refer the reader to \cite{Struwe}, \cite{tsai}, \cite{DongDu07} and references therein.

The proofs in \cite{ESS} are highly nontrivial and rely on certain regularity criteria in the light of \cite{CKN}, \cite{flin} and \cite{OL2}. That is, roughly speaking, if some scaling invariant quantities are small then the solution is locally regular. Another main ingredient of the proof is a backward uniqueness theorem of heat equations with bounded coefficients of lower order terms in the half space (see also \cite{ESS2}). Under an additional assumption on the pressure, there are some extensions of Theorem \ref{thm1.1} to the half space case and the bounded domain case; we refer the reader to \cite{Seregin05} and \cite{MiShi} for some results in this direction. Another interesting open problem is the extension to the higher dimensional Navier-Stokes equations. It seems to us that the argument in \cite{ESS} breaks down in several places when $d\ge 4$. In particular, the regularity criterion, Theorem 2.2 \cite{ESS}, is unknown for the higher dimensional Navier-Stokes equations.

We now state the main results of the article.

\begin{theorem}
                                        \label{thm1}
Let $d\ge 3$, $K>0$ and $T\in (0,\infty)$. Suppose that $u$ is a Leray-Hopf weak solution of the Cauchy problem \eqref{NSE}-\eqref{NSE2} in $(0,T)\times \bR^d$ and $u$ satisfies the condition
\begin{equation}
                                    \label{eq11.11}
u\in L_\infty^tL_d^x((0,T)\times \bR^d),\quad \|u\|_{L_\infty^tL_d^x((0,T)\times \bR^d)}\le K.
\end{equation}
Then $u\in L_{d+2}((0,T)\times \bR^d)$, and hence it is smooth and unique in $(0,T)\times \bR^d$.
\end{theorem}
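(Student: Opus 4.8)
The plan is to reduce Theorem \ref{thm1} to the assertion that $u\in L_{d+2}((0,T)\times\bR^d)$. Indeed, the pair $(r,q)=(d+2,d+2)$ satisfies $\frac 2r+\frac dq=1$ with $q=d+2\in(d,\infty)$, so once the $L_{d+2}$ bound is known, the smoothness and uniqueness follow from the Ladyzhenskaya--Prodi--Serrin theory quoted above. To prove the $L_{d+2}$ bound I would argue by contradiction. Using the local-in-time strong solvability of \eqref{NSE}--\eqref{NSE2} for data in $L_d$ together with weak--strong uniqueness, for a.e.\ $t_0\in(0,T)$ the Leray--Hopf solution $u$ coincides on a maximal interval $(t_0,t_0+\sigma(t_0))$ with a smooth mild solution; hence the set of times near which $u$ is regular is open with full measure. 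If $u\notin L_{d+2}((0,T)\times\bR^d)$, one may then single out a first singular time $T_*\in(0,T]$ with $u$ smooth on $(0,T_*)\times\bR^d$, and a short argument using \eqref{eq11.11} and interpolation shows $\esssup_{(0,t)\times\bR^d}|u|\to\infty$ as $t\uparrow T_*$.

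The heart of the matter, and the step I expect to be the main obstacle, is a local $\varepsilon$-regularity criterion valid in every dimension: there is $\varepsilon_0=\varepsilon_0(d)>0$ such that any suitable weak solution $(u,p)$ on the unit parabolic cylinder with $\|u\|_{L_\infty^tL_d^x}\le\varepsilon_0$ there is bounded, hence smooth, on the half-size cylinder. This is the $d$-dimensional analogue of Theorem 2.2 of \cite{ESS}, which, as recalled in the introduction, is not available for $d\ge4$. I would prove it in the spirit of \cite{CKN}, \cite{flin}, \cite{OL2} and \cite{DongDu07}: starting from the local energy inequality, derive a decay estimate for the natural scale-invariant quantities built from $\int_{Q_r}|u|^{d+2}$, a weighted version of $\int_{Q_r}|\nabla u|^2$, and $\int_{Q_r}|p|^{(d+2)/2}$, of the form ``excess at scale $\theta r$ $\le$ (small constant)$\times$(excess at scale $r$) $+$ controllable remainder'', and iterate to get Morrey-type decay and local boundedness. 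The delicate points are that the nonlinearity and the borderline Sobolev exponents worsen with $d$, and that the pressure lies only in $L_{(d+2)/2}$, which forces a careful local decomposition of $p$ into a Calder\'on--Zygmund part and a spatially harmonic part together with an estimate of the latter on interior cylinders; it is here that the hypothesis of $u$ being merely bounded (not small) in the far field has to be absorbed.

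Granting this criterion, near $T_*$ I would choose $M_k=\esssup_{(0,t_k]\times\bR^d}|u|\to\infty$ with $t_k\uparrow T_*$ and near-maximizing points $x_k$, and rescale $u_k(t,x)=\lambda_k\, u(t_k+\lambda_k^2 t,\,x_k+\lambda_k x)$ with $\lambda_k=M_k^{-1}$, together with the corresponding rescaling of $p$. By the scaling invariance \eqref{eq11.35} these are suitable weak solutions on cylinders exhausting $\bR^d\times(-\infty,0]$, with $\|u_k\|_{L_\infty(\{t\le0\})}\le 1+o(1)$ and $\|u_k\|_{L_\infty^tL_d^x}\le K$. Using the uniform local energy, pressure and $L_{d+2}^{\mathrm{loc}}$ bounds and the attendant compactness, I would extract a subsequential limit $v$: a bounded ancient suitable weak solution on $\bR^d\times(-\infty,0]$ with $\|v\|_{L_\infty}\le 1$ and $v\in L_\infty^tL_d^x$, nontrivial because the contrapositive of the criterion of the previous paragraph, applied at the near-maximizers, keeps $\|v\|_{L_\infty^tL_d^x}$ bounded below on a unit cylinder. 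Since $v$ is bounded it is smooth with bounded derivatives by interior regularity, and interpolation gives $v\in L_\infty^tL_q^x$ for all $q\in[d,\infty]$, whence $|v(x,t)|+|\nabla v(x,t)|\to0$ as $|x|\to\infty$, uniformly on compact time intervals.

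Finally, the vorticity $w=\nabla\times v$ (for general $d$, the antisymmetric part of $\nabla v$) satisfies a parabolic differential inequality $|\partial_t w-\Delta w|\le C(|w|+|\nabla w|)$ with bounded coefficients built from $v$ and $\nabla v$, and $w$ together with $\nabla w$ decays at spatial infinity. Combining the backward uniqueness theorem for such operators in exterior domains and half-spaces (\cite{ESS}, \cite{ESS2}) with a unique continuation argument --- exactly as in \cite{ESS}, this part being essentially dimension independent --- forces $w\equiv0$ on $\bR^d\times(-\infty,0)$. Then $\nabla v$ is symmetric and, since $\text{div}\, v=0$, each component of $v$ is harmonic in $x$ at every time; a harmonic function lying in $L_d(\bR^d)$ must vanish, so $v\equiv0$, contradicting nontriviality. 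Hence no singular time $T_*$ exists, $u\in L_{d+2}((0,T)\times\bR^d)$, and Theorem \ref{thm1} follows; the asymptotic decay statement for $T=\infty$ would be obtained by a further, softer rescaling argument as $t\to\infty$.
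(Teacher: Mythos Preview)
Your outline has two real gaps, and they are linked. First, the criterion you single out as ``the heart of the matter'' --- smallness of $\|u\|_{L_\infty^tL_d^x}$ on a unit cylinder implies local boundedness for an arbitrary suitable weak solution --- is precisely the statement the paper cannot prove and explicitly works around (see the introduction and Theorem~\ref{thm4.1}). The CKN/Lin/Ladyzhenskaya--Seregin iteration you sketch does not close in dimensions $d\ge 5$: the exponents in the energy and pressure terms deteriorate, and there is no known analogue of Theorem 2.2 of \cite{ESS}. The paper replaces this by an \emph{a priori} estimate (Theorem~\ref{thm4.1}) whose hypothesis is that $u$ is already smooth on the cylinder; this is acceptable because the rescaled solutions $u_k$ are smooth for $t<0$, i.e.\ before the first blow-up time. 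Your choice to rescale at near-maximizers with $\lambda_k=M_k^{-1}$ does produce a uniformly bounded sequence, so in fact you would not even need the $\varepsilon$-criterion to obtain regularity of the limit --- but you have used it as your mechanism for nontriviality of $v$, and that argument as written is circular (boundedness is not in question; $|v(0,0)|=1$ would follow directly from uniform $C^1$ compactness).

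The decisive gap is the backward uniqueness step. That theorem requires $\omega(0,\cdot)=0$ on an exterior domain; mere decay of $\omega$ and $\nabla\omega$ at spatial infinity is not enough. In the paper the rescaling is at the fixed singular point $(T_0,X_0)$ with $\lambda_k\downarrow 0$, and absolute continuity of the integral $\int|u(T_0,\cdot)|^d$ on shrinking balls forces $u_\infty(0,\cdot)\equiv 0$ (see the proof of Proposition~\ref{prop5.3}); this is what launches the backward uniqueness. Your rescaling centers at $(t_k,x_k)$ with $t_k<T_*$, and at $t=0$ you only know $\|v(0,\cdot)\|_{L_d}\le K$ together with $|v(0,0)|=1$: there is no reason for $v(0,\cdot)$, or its curl, to vanish anywhere. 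Without that terminal-time vanishing the backward uniqueness theorem simply does not apply, and the argument stops. The paper's architecture --- rescale at the singular point to get $u_\infty(0,\cdot)=0$; use the smoothness of $u_k$ for $t<0$ and Theorem~\ref{thm4.1} to show $u_\infty$ is bounded far from the origin; run backward uniqueness plus spatial analyticity to force $u_\infty\equiv 0$; then return to $u_k$ and apply Theorem~\ref{thm4.1} once more near the origin --- is designed exactly to supply this missing ingredient while avoiding the unavailable $\varepsilon$-regularity criterion.
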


\begin{theorem}
                                        \label{thm2}
Let $d\ge 3$ and $K>0$. Suppose that $u$ is a Leray-Hopf weak solution of the Cauchy problem \eqref{NSE}-\eqref{NSE2} in $(0,\infty)\times \bR^d$ and $u$ satisfies the condition
\begin{equation}
                                    \label{eq11.11b}
u\in L_\infty^tL_d^x((0,\infty)\times \bR^d),\quad \|u\|_{L_\infty^tL_d^x((0,\infty)\times \bR^d)}\le K.
\end{equation}
Then $u$ is smooth and unique in $(0,\infty)\times \bR^d$. Moreover, we have
\begin{equation}
                            \label{eq10.22}
\lim_{t\to\infty}\|u(t,\cdot)\|_{L_\infty}=0.
\end{equation}
\end{theorem}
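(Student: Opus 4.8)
The plan is to read off the smoothness and uniqueness on $(0,\infty)\times\bR^d$ directly from Theorem \ref{thm1}, and to prove the decay \eqref{eq10.22} by a blow-up (time- and space-translation) argument together with a compactness argument based on the global energy inequality. For the first part, observe that for every finite $T'\in(0,\infty)$ the hypothesis \eqref{eq11.11b} implies \eqref{eq11.11} on $(0,T')\times\bR^d$ with the same constant $K$; Theorem \ref{thm1} then gives that $u$ is smooth and unique in $(0,T')\times\bR^d$, with $u\in L_{d+2}((0,T')\times\bR^d)$. Since $T'$ is arbitrary, $u$ is smooth and unique in $(0,\infty)\times\bR^d$.

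To prove \eqref{eq10.22}, argue by contradiction: if it fails, then, using that $u$ is continuous on $(0,\infty)\times\bR^d$, there are $\epsilon>0$, a sequence $t_n\to\infty$, and points $x_n\in\bR^d$ with $|u(t_n,x_n)|\ge\epsilon$. Since $u$ is a Leray--Hopf weak solution with $L_2$ data, the energy inequality yields $\int_0^\infty\|\nabla u(t,\cdot)\|_{L_2}^2\,dt<\infty$, whence $\int_{t_n-1}^{t_n+1}\|\nabla u(t,\cdot)\|_{L_2}^2\,dt\to 0$ as $n\to\infty$. Set $v_n(t,x):=u(t+t_n,x+x_n)$ and let $p_n(t,\cdot)$ be the translate $p(t+t_n,\cdot+x_n)$ normalized to have zero mean over the unit ball $B_1$ for each $t$. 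For $n$ large, $(v_n,p_n)$ is a smooth solution of \eqref{NSE} on $(-1,1)\times\bR^d$, and it satisfies $\|v_n\|_{L_\infty^tL_d^x((-1,1)\times\bR^d)}\le K$, $\|\nabla v_n\|_{L_2((-1,1)\times\bR^d)}\to 0$, and $|v_n(0,0)|\ge\epsilon$.

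Now apply, uniformly in $n$, the interior regularity estimates used in the proof of Theorem \ref{thm1}: the Caffarelli--Kohn--Nirenberg--type $\epsilon$-regularity criterion combined with the scale-invariant a priori bound $\|v_n\|_{L_\infty^tL_d^x}\le K$ gives uniform bounds for $(v_n,p_n)$ in the H\"older spaces $C^{k,\alpha}$ on every compact subset of $(-1/2,1/2)\times\bR^d$, for every $k$. Passing to a subsequence, $v_n\to\bar u$ and $p_n\to\bar p$ in $C_{loc}$, where $(\bar u,\bar p)$ is a smooth solution of \eqref{NSE} on $(-1/2,1/2)\times\bR^d$; by Fatou's lemma $\bar u\in L_\infty^tL_d^x$ with norm $\le K$; and from $\|\nabla v_n\|_{L_2}\to 0$ we get $\nabla\bar u\equiv 0$, so that $\bar u(t,\cdot)$ is a constant vector for each $t$. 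Since a nonzero constant vector field does not belong to $L_d(\bR^d)$, we conclude $\bar u\equiv 0$, which contradicts $|\bar u(0,0)|=\lim_n|v_n(0,0)|\ge\epsilon$. This establishes \eqref{eq10.22}.

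The main obstacle is the third step --- the \emph{uniform-in-$n$} interior estimates, and in particular the control of the pressures $p_n$ on compact sets. Because the pressure is determined nonlocally by the velocity ($-\Delta p=\partial_i\partial_j(u_iu_j)$), only its \emph{local} part is directly controlled by $\|u\|_{L_d}^2$, and the remaining harmonic part has to be treated via the equation on bounded time intervals; this is why $p_n$ is normalized by subtracting a spatial mean and why one invokes interior estimates for the nonstationary Stokes system (or treats $u\cdot\nabla u+\nabla p$ as a forcing term in the heat equation). One must also check that the convergence is strong enough to pass both the nonlinear term and the pointwise value $v_n(0,0)$ to the limit, which follows once the uniform H\"older bounds are available.
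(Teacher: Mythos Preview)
Your reduction of smoothness and uniqueness to Theorem~\ref{thm1} is correct and matches the paper. For the decay \eqref{eq10.22} the paper takes a different route: it rescales $u_\lambda(t,x)=\lambda u(\lambda^2 t,\lambda x)$, uses $u_\lambda\in L_4((0,\infty)\times\bR^d)$ (Lemma~\ref{lem3.3}) to make $C(\rho,z_0,u_\lambda,p_\lambda)$ small for large times, then applies Lemma~\ref{lem2.3}, Proposition~\ref{prop3.1}, and Theorem~\ref{thm4.1} to get $|u_\lambda|\le N(d,K)$ for $t\ge T_\lambda$, i.e.\ $|u|\le N/\lambda$ for $t\ge\lambda^2 T_\lambda$; sending $\lambda\to\infty$ finishes. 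Your translation/compactness argument is a legitimate alternative, and once fixed it is really a cousin of the paper's method (both ultimately feed smallness of $C$ into Theorem~\ref{thm4.1}).

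There is, however, a genuine gap at the crucial step. You write that ``the Caffarelli--Kohn--Nirenberg--type $\epsilon$-regularity criterion combined with the scale-invariant a priori bound $\|v_n\|_{L_\infty^tL_d^x}\le K$ gives uniform bounds''. This is exactly what does \emph{not} follow: the bound $\|v_n\|_{L_\infty^tL_d^x}\le K$ only yields $C(\rho,z_0,v_n)\le N(d)K^{2(d+3)/(d+1)}$, which is $O(1)$, not small, so Theorem~\ref{thm4.1} (the paper's $\epsilon$-criterion) does not apply. That $L_\infty L_d$ fails to trigger $\epsilon$-regularity directly is precisely the difficulty the whole paper is built to overcome. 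The remedy is already in your hands: from $\|\nabla v_n\|_{L_2((-1,1)\times\bR^d)}\to 0$ and Sobolev you get $\|v_n\|_{L_2^tL_{2d/(d-2)}^x}\to 0$; interpolating with $\|v_n\|_{L_\infty^tL_d^x}\le K$ (and using H\"older on the bounded ball) gives $C(\rho,z_0,v_n)\to 0$ for each fixed $\rho,z_0$. Then Lemma~\ref{lem2.3} and Proposition~\ref{prop3.1} make $C+D$ small at all sub-scales, and Theorem~\ref{thm4.1} yields the uniform $L_\infty$ bound; parabolic regularity then gives the uniform H\"older bounds you need to pass $v_n(0,0)$ to the limit. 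Without this link your compactness step is unjustified.

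A smaller point: your pressure normalization is unnecessary and slightly misleading. By the Calder\'on--Zygmund estimate (see Lemma~\ref{lem3.3}) one has $p\in L_\infty^tL_{d/2}^x((0,\infty)\times\bR^d)$, hence the translates $p_n$ already satisfy $\|p_n\|_{L_\infty^tL_{d/2}^x}\le N(d)K^2$ uniformly, which is precisely the second hypothesis \eqref{eq18.10.21} in Theorem~\ref{thm4.1}. No mean subtraction is needed, and the ``main obstacle'' you flag dissolves once you use this global bound.
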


We give a brief description of our argument. As in \cite{ESS} we prove by contradiction and  blow up the solution near a singular point at the first blow-up time to obtain a sequence of solutions $\{u_k\}$.
The limiting function $u_\infty$ of this sequence is a suitable weak solution of the Navier-Stokes equations.
Note that the solutions $u_k$ are smooth before the first blow-up time.
As  we mentioned before, we are not able to establish a regularity criterion similar to Theorem 2.2 \cite{ESS}, which says if certain scaling invariant quantities are small then the solution is locally H\"older continuous. Instead we  use a modified one.
Roughly speaking, we show that if the solutions are smooth, the $L^t_{\infty}L^x_d$ norm is bounded and some
scaling invariant quantities are small,
then we have a priori $L_{\infty}$ bound for the solutions on a much smaller ball.
Here the point is the a priori $L_{\infty}$ bound only depends on the
$L^t_{\infty}L^x_d$ norm and the dimension.
This regularity criterion together with the $L_p$-convergence of $u_k$ yields the local boundedness of $u_\infty$ outside a large cylinder. The local boundedness implies the local smoothness of $u_\infty$. Then we use the backward uniqueness proved \cite{ESS} to see that $u_\infty$ is equivalent to zero outside a large cylinder, which further implies that $u_\infty\equiv 0$ by using the spatial analyticity of strong solutions and the weak-strong uniqueness of the Navier-Stokes equations.
This  means the sequence $u_k$ converges to zero in $L_p$ on any compact set. Going back to the original solution $u$ we see that the modified regularity criterion applies,
which gives a contradiction and proves Theorem \ref{thm1}. To prove Theorem \ref{thm2}, we notice that $u$ is in $L_4((0,\infty)\times \bR^d)$, which implies the smallness of its $L_4$ norm in $(T,\infty)\times \bR^d$ for large $T$. Then we use the modified regularity criterion again and the scaling \eqref{eq11.35}.

We remark that a decay result similar to that of Theorem \ref{thm2} was obtained in \cite{GIP} by using a completely different method.

The remaining part of the article is organized as follows. We give a few definitions and prove several preliminary results in the next section. In Section \ref{sec3}, we prove a key estimate (Proposition \ref{prop3.1}) about the scaling invariant quantities and construct a sequence of solutions by blowing up the solution at a singular point. Section \ref{sec4} is devoted to the proof of a local boundedness estimate (Theorem \ref{thm4.1}). We finish the proof of Theorem \ref{thm1} and \ref{thm2} in Section \ref{sec5}.

\mysection{Preliminaries}
						\label{sec2}
We make a few preparations in this section. We use the notation in \cite{OL2}. Let $\omega$ be a domain in
some finite-dimensional space. Denote $L_p(\omega ;\bR^n)$ and
$W^k_p(\omega;\bR^n)$ to be the usual Lebesgue and Sobolev spaces of
functions from $\omega$ into $\bR^n$. Denote the norm of the spaces
$L_p(\omega;\bR^n)$ and $W^k_p(\omega;\bR^n)$ by
$\|\cdot\|_{L_p(\omega)}$ and $\|\cdot\|_{W^k_p(\omega)}$
respectively. As usual, for any measurable function $u=u(x,t)$ and
any $p,q\in [1,+\infty]$,  we define
$$
\|u(x,t)\|_{L_t^pL_x^q}:=\big\|\|u(x,t)\|_{L_x^q}\big\|_{L_t^p}.
$$

For summable functions $p,u=(u_i)$ and $\tau=(\tau_{ij})$, we use
the following differential operators
$$
\partial_t u=u_t=\frac{\partial u}{\partial t},\,\,\,\,
u_{,i}=\frac{\partial u}{\partial x_i},\,\,\,\, \nabla
p=(p_{,i}),\,\,\,\,\nabla u=(u_{i,j}),
$$
$$
\text{div}\,u=u_{i,i},\,\,\,\,\text{div}\,\tau=(\tau_{ij,j}),\,\,\,\,
\Delta u=\text{div}\nabla u,
$$
which are understood in the sense of distributions. We use the notation of spheres, balls and parabolic cylinders,
$$
S(x_0,r)=\{x\in \bR^4\,\vert\, |x-x_0|=r\},\quad S(r)=S(0,r),\quad
S=S(1);
$$
$$
B(x_0,r)=\{x\in \bR^4\,\vert\, |x-x_0|<r\},\quad B(r)=B(0,r),\quad B=B(1);
$$
$$
Q(z_0,r)=B(x_0,r)\times (t_0-r^2,t_0),\quad Q(r)=Q(0,r),\quad Q=Q(1).
$$
Also we denote mean values of summable functions as follows
$$
[u]_{x_0,r}(t)=\frac{1}{|B(r)|}\int_{B(x_0,r)}u(x,t)\,dx,
$$
$$
(u)_{z_0,r}=\frac{1}{|Q(r)|}\int_{Q(z_0,r)}u\,dz.
$$

We recall the following well-known interpolation inequality.
\begin{lemma}
                    \label{lemmamulti}
For any functions $u\in W^1_2(\bR^d)$ and any $q\in [2,2d/(d-2)]$
and $r>0$,
$$
\int_{B_r}|u|^q\,dx\leq N(q)\Big[\big(\int_{B_r}|\nabla u|^2\,
dx\big)^{d(q/4-1/2)}
\big(\int_{B_r}|u|^2\,dx\big)^{q/2-d(q/4-1/2)}
$$
\begin{equation}
                        \label{eq11.30}
 +r^{-d(q-2)/2}\big(\int_{B_r}|u|^2\,dx\big)^{q/2}\Big].
\end{equation}
\end{lemma}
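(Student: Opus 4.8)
\emph{Proof proposal.}
The plan is to deduce \eqref{eq11.30} from the scale-invariant local Sobolev inequality
\begin{equation}
                \label{eq:locsob}
\Big(\int_{B_r}|u|^{2^*}\,dx\Big)^{2/2^*}\le N(d)\Big[\int_{B_r}|\nabla u|^2\,dx+r^{-2}\int_{B_r}|u|^2\,dx\Big],\qquad 2^*:=\frac{2d}{d-2},
\end{equation}
by interpolation in Lebesgue spaces. Both sides of \eqref{eq11.30} scale like $\lambda^{-d}$ under $u(x)\mapsto u(\lambda x)$, $r\mapsto r/\lambda$, and the same rescaling leaves \eqref{eq:locsob} invariant, so it is enough to verify everything for $r=1$; nevertheless it is just as easy to carry the general $r$ along.

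I would first establish \eqref{eq:locsob}. Write $\bar u:=|B_r|^{-1}\int_{B_r}u\,dx$. The quickest route is the Poincar\'e pointwise representation
\begin{equation*}
|u(x)-\bar u|\le N(d)\int_{B_r}\frac{|\nabla u(y)|}{|x-y|^{d-1}}\,dy\qquad\text{for a.e. }x\in B_r,
\end{equation*}
combined with the Hardy--Littlewood--Sobolev inequality, which for $d\ge 3$ maps $L_2$ into $L_{2^*}$; this gives $\|u-\bar u\|_{L_{2^*}(B_r)}\le N(d)\|\nabla u\|_{L_2(B_r)}$. Since $|\bar u|\le N(d)r^{-d/2}\|u\|_{L_2(B_r)}$ by the Cauchy--Schwarz inequality and $|B_r|^{1/2^*}=N(d)r^{(d-2)/2}$, the triangle inequality in $L_{2^*}(B_r)$ yields \eqref{eq:locsob}. (Alternatively, rescale to $r=1$, extend $u$ to $\bR^d$ by a bounded extension operator $W^1_2(B_1)\to W^1_2(\bR^d)$, and apply the Sobolev embedding $W^1_2(\bR^d)\hookrightarrow L_{2^*}(\bR^d)$.)

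Next, given $q\in[2,2^*]$, pick $\beta\in[0,1]$ with $\tfrac1q=\tfrac{1-\beta}{2}+\tfrac{\beta}{2^*}$, that is $\beta=\tfrac{d(q-2)}{2q}$. H\"older's inequality gives
\begin{equation*}
\int_{B_r}|u|^q\,dx\le\Big(\int_{B_r}|u|^2\,dx\Big)^{(1-\beta)q/2}\Big(\int_{B_r}|u|^{2^*}\,dx\Big)^{\beta q/2^*}.
\end{equation*}
Substituting \eqref{eq:locsob}, using $(a+b)^{\beta q/2}\le N(q)\big(a^{\beta q/2}+b^{\beta q/2}\big)$ with $a=\int_{B_r}|\nabla u|^2\,dx$, $b=r^{-2}\int_{B_r}|u|^2\,dx$, and simplifying via the identities
\begin{equation*}
\frac{\beta q}{2}=d\Big(\frac q4-\frac12\Big),\qquad \frac{(1-\beta)q}{2}=\frac q2-d\Big(\frac q4-\frac12\Big),\qquad \beta q=\frac{d(q-2)}{2},
\end{equation*}
one arrives at \eqref{eq11.30}. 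The constant a priori depends on $q$ and $d$, but for each fixed $q>2$ only finitely many $d\ge 3$ satisfy $q\le 2d/(d-2)$, so it may be written as $N(q)$; the endpoint $q=2$ is trivial, and $q=2^*$ is \eqref{eq:locsob} raised to the power $2^*/2$.

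I do not anticipate a genuine obstacle: this is a textbook Gagliardo--Nirenberg estimate whose only substantive ingredient is the Sobolev embedding $W^1_2\hookrightarrow L_{2d/(d-2)}$ in the scale-invariant form \eqref{eq:locsob}. The one point requiring care is the bookkeeping of the powers of $r$, so that the lower-order term $r^{-2}\int_{B_r}|u|^2\,dx$ in \eqref{eq:locsob} --- which cannot be removed on a bounded domain --- produces exactly the term $r^{-d(q-2)/2}\big(\int_{B_r}|u|^2\,dx\big)^{q/2}$ on the right-hand side of \eqref{eq11.30}.
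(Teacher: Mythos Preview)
Your argument is correct and is the standard derivation of this Gagliardo--Nirenberg inequality from the scale-invariant Sobolev--Poincar\'e inequality \eqref{eq:locsob} together with $L_p$ interpolation; the exponent bookkeeping checks out. The paper itself gives no proof at all---it merely records the inequality as well known---so there is nothing to compare against, and your write-up supplies exactly the routine verification the paper omits. The aside about absorbing the $d$-dependence into $N(q)$ is unnecessary: in the paper $d$ is fixed, and $N(q)$ is tacitly $N(d,q)$.
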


\subsection{Leray-Hopf weak solutions}
						\label{sec2.1}
We denote $\dot C_0^\infty$ the space of all divergence-free infinitely differentiable vector fields with compact support in $\bR^d$. Let $\dot J$ and $\dot J^1_2$ be the closure of $\dot C_0^\infty$ in the spaces $L_2$ and $W^1_2$, respectively. For any $T\in (0,\infty]$, denote
$$
\bR^{d+1}_T=(0,T)\times \bR^d.
$$
By a Leray-Hopf weak solution of \eqref{NSE}-\eqref{NSE2} in $\bR^{d+1}_T$, we mean a vector field $u$ 
such that:

i) $u\in L_\infty(0,T;\dot J)\cap L_2(0,T;\dot J^1_2)$;

ii) the function $t\to\int_{\bR^d}u(t,x)\cdot w(x)\,dx$ is continuous on $[0,T]$ for any $w\in L_2$;

iii) the equation \eqref{NSE} holds weakly in the sense that for any $w\in \dot C_0^\infty(\bR^{d+1}_T)$,
\begin{equation}
                                    \label{eq10.11}
\int_{\bR^{d+1}_T}(-u\cdot \partial_t w-u\otimes u\,:\,\nabla w+\nabla u\,:\,\nabla w)\,dx\,dt=0;
\end{equation}

iv) The energy inequality:
\begin{equation*}
\frac 1 2\int_{\bR^d}|u(t,x)|^2\,dx+\int_{\bR^d_{t}}|\nabla u|^2\,dx\,ds\le \frac 1 2\int_{\bR^d}|a(x)|^2\,dx
\end{equation*}
holds for any $t\in [0,T]$, and we have
$$
\|u(t,\cdot)-a(\cdot)\|_{L_2}\to 0\quad \text{as}\,\,t\to 0.
$$
It is well known that for any $a\in \dot J$, there exists at least one Leray-Hopf weak solution of the Cauchy problem \eqref{NSE}-\eqref{NSE2} on $(0,\infty)\times\bR^d $ (see \cite{leray} and \cite{Hopf}).

\subsection{Suitable weak solutions}

The definition of suitable weak solutions was introduced in \cite{CKN} (see also \cite{flin} and \cite{OL2}). Let $\omega$ be an open set in $\bR^d$. By a suitable weak solution of the Navier-Stokes equations on the set $(0,T)\times\omega$, we mean a pair $(u,p)$ such that

i) $u\in L_\infty(0,T;\dot J)\cap L_2(0,T;\dot J^1_2)$ and $p\in L_{d/2}((0,T)\times\omega)$;

ii) $u$ and $p$ satisfy equation \eqref{NSE} in the sense of distributions \eqref{eq10.11}.

iii) For any $t\in (0,T)$ and for any nonnegative function $\psi\in C_0^\infty(\bR^d)$ vanishing in a neighborhood of the parabolic boundary $ \{t=0\}\times\omega\cup [0,T]\times\partial\omega$, we have the local energy inequality
\begin{multline}
\esssup_{0<s\le t}\int_{\omega}|u(s,x)|^2\psi(s,x)\,dx+2\int_{ (0,t)\times \omega}|\nabla u|^2\psi\,dx\,ds \\
                    \label{energy}
\leq \int_{(0,t)\times \omega}\{|u|^2(\psi_t+\Delta \psi)+(|u|^2+2p)u\cdot
\nabla\psi\}\,dx\,ds.
\end{multline}

\subsection{Scaling invariant quantities}
The following notation will be used throughout the article:
$$
A(r)=A(r,z_0)=\esssup_{t_0-r^2\leq t\leq
t_0}\frac{1}{r^{d-2}}\int_{B(x_0,r)}|u(x,t)|^2\,dx,
$$
$$
E(r)=E(r,z_0)=\frac{1}{r^{d-2}}\int_{Q(z_0,r)}|\nabla u|^2\,dz,
$$
$$
C(r)=C(r,z_0)=\frac{1}{r^{d-4/(d+1)}}
\int_{Q(z_0,r)}|u|^{2(d+3)/(d+1)}\,dz,
$$
$$
D(r)=D(r,z_0)=\frac{1}{r^{d-4/(d+1)}}
\int_{Q(z_0,r)}|p|^{(d+3)/(d+1)}\,dz.
$$
We notice that these quantities are all invariant under the natural scaling \eqref{eq11.35}.

We shall use the following two lemmas involving these quantities.

\begin{lemma}
 				\label{lem2.1}
Let $\rho>0$, $\epsilon>0$ be constants and $(u,p)$ a pair of suitable weak solution of \eqref{NSE}. Suppose $Q(z_0,\rho)\subset \bR^{d+1}_T$ and
$$
C(\rho)+D(\rho)\le \epsilon^{2(d+3)/(d+1)}.$$
Then under the condition \eqref{eq11.11}, we have
\begin{equation*}
A(\rho/2)+E(\rho/2)\le N\epsilon^2.
\end{equation*}
\end{lemma}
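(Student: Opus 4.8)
The plan is to exploit the local energy inequality \eqref{energy} together with the smallness hypothesis $C(\rho)+D(\rho)\le\epsilon^{2(d+3)/(d+1)}$ to bootstrap a bound on $A(\rho/2)+E(\rho/2)$. First I would fix a cutoff function $\psi$ that is supported in a parabolic neighborhood of $Q(z_0,3\rho/4)$ (say), equals $1$ on $Q(z_0,\rho/2)$, and satisfies the natural scaling bounds $|\psi_t|+|\nabla^2\psi|\le N\rho^{-2}$ and $|\nabla\psi|\le N\rho^{-1}$; one has to be slightly careful that $\psi$ vanishes near the parabolic boundary of the cylinder on which $(u,p)$ is a suitable weak solution, but since $Q(z_0,\rho)$ is compactly contained in $\bR^{d+1}_T$ this is arranged by the usual translation-in-time trick. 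Plugging $\psi$ into \eqref{energy} and dividing by $\rho^{d-2}$, the left side dominates a constant multiple of $A(\rho/2)+E(\rho/2)$, while the right side splits into three terms: $\rho^{-d}\int_{Q(z_0,3\rho/4)}|u|^2$, $\rho^{-(d-1)}\int_{Q(z_0,3\rho/4)}|u|^3$, and $\rho^{-(d-1)}\int_{Q(z_0,3\rho/4)}|p|\,|u|$.

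The second term is handled by Hölder's inequality so as to produce the quantity $C(3\rho/4)$: since $|u|^3=(|u|^{2(d+3)/(d+1)})^{3(d+1)/(2(d+3))}$, writing $3(d+1)/(2(d+3))=\theta<1$ and applying Hölder with exponents $1/\theta$ and its conjugate over $Q(z_0,3\rho/4)$, one gets $\rho^{-(d-1)}\int|u|^3\le N\,\rho^{-(d-1)}\,|Q(z_0,3\rho/4)|^{1-\theta}\big(\rho^{d-4/(d+1)}C(3\rho/4)\big)^{\theta}$, and a bookkeeping of the powers of $\rho$ shows the $\rho$-factors cancel, leaving $N\,C(3\rho/4)^{\theta}$. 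The pressure term is treated the same way using Hölder in the pair $(p,u)$ with exponents $(d+3)/(d+1)$ and its conjugate, yielding $N\,D(3\rho/4)^{(d+1)/(d+3)}C(3\rho/4)^{1/(d+3)}$ or a similar product; in all cases each factor is bounded by $\epsilon^{2(d+3)/(d+1)}$ raised to an exponent that makes the total at least $\epsilon^2$ (up to the dimensional constant $N$), and by Young's inequality the mixed products are absorbed into $N\epsilon^2$ as well.

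The remaining term, $\rho^{-d}\int_{Q(z_0,3\rho/4)}|u|^2\,dz$, is the only one not controlled directly by $C$ and $D$, and this is where hypothesis \eqref{eq11.11} enters and constitutes the main point of the lemma. Here I would not try to bound it by $C$; instead, for each fixed time slice I estimate $\int_{B(x_0,3\rho/4)}|u(x,t)|^2\,dx\le \|u(t,\cdot)\|_{L_d}^2\,|B(3\rho/4)|^{1-2/d}\le K^2\,N\rho^{d-2}$ by Hölder's inequality in space together with \eqref{eq11.11}. Integrating over the time interval of length $(3\rho/4)^2$ gives $\int_{Q(z_0,3\rho/4)}|u|^2\,dz\le N K^2 \rho^{d}$, so the term contributes at most $N K^2$, which after a harmless readjustment of constants (noting $\epsilon$ can be taken $\le 1$, or more precisely that the conclusion is asserted with a constant $N$ allowed to depend on $K$ and $d$) is of the form $N\epsilon^2$ in the normalization used elsewhere in the paper. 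Collecting the three estimates yields $A(\rho/2)+E(\rho/2)\le N\epsilon^2$.

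The main obstacle, and the reason this differs from the classical Caffarelli–Kohn–Nirenberg-type estimate, is precisely the $|u|^2$ term: in the scaling-critical setting it is genuinely not small and cannot be bounded by the small quantities $C(\rho)+D(\rho)$ alone, so one is forced to absorb it using the a priori $L_\infty^tL_d^x$ bound \eqref{eq11.11}; the payoff, crucial for the rest of the argument, is that the resulting constant depends only on $K$ and the dimension $d$ and not on the solution itself.
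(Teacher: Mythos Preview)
There is a genuine gap, and it comes from misplacing where hypothesis \eqref{eq11.11} is actually needed.

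Your treatment of the term $\rho^{-d}\int_{Q}|u|^2$ produces only the fixed bound $NK^2$, not $N\epsilon^2$. A constant of size $NK^2$ cannot be absorbed into $N\epsilon^2$ when $\epsilon$ is small, and the lemma is invoked precisely with $\epsilon=\epsilon_k\to 0$ in the proof of Lemma~\ref{lem3.2}, where the entire point is to divide by $\epsilon_k$ and keep the rescaled functions $v_k=u_k/\epsilon_k$ bounded. In fact this term requires no appeal to \eqref{eq11.11} at all: since $2<2(d+3)/(d+1)$, H\"older on the cylinder gives directly (after scaling to $\rho=1$)
\[
\int_{Q(z_0,1)}|u|^2\,dz\le N\,C(1)^{(d+1)/(d+3)}\le N\epsilon^2,
\]
which is exactly the required smallness.

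Conversely, your H\"older argument for the cubic term breaks down for $d\ge 4$: you set $\theta=3(d+1)/(2(d+3))$ and assert $\theta<1$, but $2(d+3)/(d+1)<3$ once $d\ge 4$, so $\theta>1$ and the inequality runs the wrong way. The same obstruction hits your $|p||u|$ estimate, since $(d+3)/2>2(d+3)/(d+1)$ for $d\ge 4$. This is where \eqref{eq11.11} is genuinely used in the paper: one writes $(|u|^2+2|p|)|u|$, applies H\"older with exponents $(d+3)/2$ and $(d+3)/(d+1)$, and bounds the lone factor
\[
\Big(\int_{Q(z_0,1)}|u|^{(d+3)/2}\,dz\Big)^{2/(d+3)}\le \Big(\int_{Q(z_0,1)}|u|^{d}\,dz\Big)^{1/d}\le N(K)
\]
via \eqref{eq11.11} (note $(d+3)/2\le d$ for $d\ge 3$), while the remaining factor is $(C+D)^{(d+1)/(d+3)}\le \epsilon^2$. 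In short, you have swapped the roles of the two ingredients: the quadratic term is the one that is already small (controlled by $C$ alone), and it is the cubic and pressure terms that need to borrow one power of $|u|$ from the $L_\infty^tL_d^x$ bound.
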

\begin{proof}
By a scaling argument, we may assume without loss of generality that $\rho=1$. In the
energy inequality \eqref{energy}, we put $t=t_0$ and choose a
suitable smooth cut-off function $\psi$ such that
$$
\psi\equiv 0\,\,\text{in}\,\,\bR^{d+1}_{t_0}\setminus Q(z_0,1), \quad 0\leq
\psi\leq 1\,\,\text{in}\,\,\bR^{d+1}_T,
$$
$$
\psi\equiv 1 \,\,\text{in}\,Q(z_0,1/2),\quad |\nabla
\psi|<N,\,\, |\partial_t \psi|+|\nabla^2
\psi|<N\,\,\text{in}\,\,\bR^{d+1}_{t_0}.
$$
By using \eqref{energy}, we get
\begin{align*}
A(1/2)+2E(1/2)\leq N\int_{Q(z_0,1)}|u|^2\,dz+N\int_{Q(z_0,1)}(|u|^2+2|p|)|u|\,dz.
\end{align*}
Due to H\"older's inequality, one can obtain
$$
\int_{Q(z_0,1)}|u|^2\,dz\leq N(C(1))^{(d+1)/(d+3)}\le N\epsilon^2,
$$
and
\begin{align*}
&\int_{Q(z_0,1)}(|u|^2+2|p|)|u|\,dz\\
&\le N
\left(\int_{Q(z_0,1)}|u|^{\frac{d+3}{2}}\,dz\right)^{\frac{2}{d+3}}
\left(\int_{Q(z_0,1)}|u|^{\frac{2(d+3)}{d+1}}+|p|^{\frac{d+3}{d+1}}\,dz\right)^{\frac{d+1}{d+3}}\\
&\le N\left(\int_{Q(z_0,1)}|u|^{d}\,dz\right)^{\frac{1}{d}}(C(1)+D(1))^{(d+1)/(d+3)}\\
&\le N\epsilon^{2},
\end{align*}
where in the last inequality we used \eqref{eq11.11}.
The conclusion of Lemma \ref{lem2.1} follows immediately.
\end{proof}

\begin{lemma}
 			\label{lem2.3}
Suppose $\gamma\in (0,1/2]$, $\rho>0$ are constants and $Q(z_0,\rho)\in \bR^{d+1}_T$. Then we have
\begin{equation}
                                \label{eq11.29}
D(\gamma\rho)\leq N\big[\gamma^{-d+4/(d+1)}C(\rho)
+\gamma^{4/(d+1)} D(\rho)\big].
\end{equation}
\end{lemma}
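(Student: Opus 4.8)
The plan is to use the classical pressure decomposition: locally split $p$ into a Newtonian‑potential part, controlled by the Calder\'on--Zygmund inequality in terms of $u$, and a harmonic remainder, controlled by interior estimates for harmonic functions. By the translation invariance of \eqref{NSE} and the scaling \eqref{eq11.35}, under which $C$ and $D$ are invariant, I may assume $z_0=0$ and $\rho=1$, so that $Q(1)=B(1)\times(-1,0)$ and $Q(\gamma)=B(\gamma)\times(-\gamma^2,0)$; write $q=(d+3)/(d+1)$, which lies in $(1,\infty)$ for every $d\ge 3$, so that the integrands appearing in $C$ and $D$ are $|u|^{2q}$ and $|p|^q$. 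Fix a cut‑off $\phi\in C_0^\infty(B(1))$ with $\phi\equiv 1$ on $B(3/4)$ and $0\le\phi\le1$, and for a.e.\ $t\in(-1,0)$ set
\begin{equation*}
\tilde p(t,x)=\mathcal R_i\mathcal R_j\big[\phi\,u_i(t,\cdot)u_j(t,\cdot)\big](x),\qquad h=p-\tilde p,
\end{equation*}
where $\mathcal R_k$ are the Riesz transforms. Since $(u,p)$ solves \eqref{NSE}, taking the divergence of the equation gives $-\Delta p=\partial_i\partial_j(u_iu_j)$ in the sense of distributions, while $-\Delta\tilde p=\partial_i\partial_j(\phi u_iu_j)$; hence $h(t,\cdot)$ is harmonic in $B(3/4)$ for a.e.\ $t$. (The local integrability $u\in L_{2q}$, which makes $\tilde p$ meaningful and $C(1)$ finite, follows from $u\in L_\infty^tL_2^x\cap L_2^t\dot J^1_2$ together with Lemma \ref{lemmamulti}; if $C(1)$ or $D(1)$ is infinite there is nothing to prove.)

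First I would estimate $\tilde p$. The Calder\'on--Zygmund inequality on $\bR^d$ with exponent $q\in(1,\infty)$ gives, for a.e.\ $t$,
\begin{equation*}
\|\tilde p(t,\cdot)\|_{L_q(\bR^d)}\le N\big\|\phi\,|u(t,\cdot)|^2\big\|_{L_q(\bR^d)}\le N\|u(t,\cdot)\|_{L_{2q}(B(1))}^2.
\end{equation*}
Raising to the power $q$, integrating over $t\in(-\gamma^2,0)\subset(-1,0)$, and recalling $2q=2(d+3)/(d+1)$, I get
\begin{equation*}
\int_{Q(\gamma)}|\tilde p|^q\,dz\le\int_{-\gamma^2}^{0}\|\tilde p(t,\cdot)\|_{L_q(\bR^d)}^q\,dt\le N\int_{Q(1)}|u|^{2q}\,dz=N\,C(1).
\end{equation*}

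Next I would estimate $h$. For $x\in B(1/2)\supset B(\gamma)$ the ball $B(x,1/4)$ is contained in $B(3/4)$, so the mean value property of $h(t,\cdot)$ and H\"older's inequality yield
\begin{equation*}
\sup_{B(\gamma)}|h(t,\cdot)|^q\le N\Big(\int_{B(3/4)}|h(t,y)|\,dy\Big)^q\le N\int_{B(3/4)}|h(t,y)|^q\,dy\le N\int_{B(1)}\big(|p|^q+|\tilde p|^q\big)\,dy.
\end{equation*}
Multiplying by $|B(\gamma)|\le N\gamma^d$, integrating over $t\in(-\gamma^2,0)$, and using the bound on $\tilde p$ once more,
\begin{equation*}
\int_{Q(\gamma)}|h|^q\,dz\le N\gamma^{d}\int_{Q(1)}\big(|p|^q+|u|^{2q}\big)\,dz=N\gamma^{d}\big(D(1)+C(1)\big).
\end{equation*}

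Finally, since $|p|^q\le N(|\tilde p|^q+|h|^q)$, dividing the sum of the two estimates by $\gamma^{d-4/(d+1)}$ gives
\begin{equation*}
D(\gamma)\le N\big[\gamma^{-d+4/(d+1)}C(1)+\gamma^{4/(d+1)}\big(C(1)+D(1)\big)\big],
\end{equation*}
and because $\gamma\le1/2$ one has $\gamma^{4/(d+1)}\le\gamma^{-d+4/(d+1)}$, so the extra term $\gamma^{4/(d+1)}C(1)$ is absorbed into the first term; undoing the scaling yields exactly \eqref{eq11.29}. There is no essential obstacle here: the only thing to be careful about is the bookkeeping of the scaling exponents and the observation that the Calder\'on--Zygmund estimate and the harmonic interior estimate are applied with the single exponent $q=(d+3)/(d+1)$, which is admissible precisely because $1<q<\infty$ for all $d\ge3$.
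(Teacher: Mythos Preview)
Your proof is correct and follows essentially the same route as the paper's: both decompose the pressure into a Calder\'on--Zygmund piece (Newtonian potential of the localized $\partial_i\partial_j(u_iu_j)$, which you write via Riesz transforms) and a harmonic remainder, bound the first by $C(\rho)$ and the second by interior estimates for harmonic functions, and then combine. The only cosmetic differences are your preliminary normalization $z_0=0$, $\rho=1$, and your choice of cutoff supported where $\phi\equiv1$ on $B(3/4)$ rather than $B(2/3)$.
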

\begin{proof}
Let $\eta(x)$ be a smooth function on $\bR^d$ supported in the unit
ball $B(1)$, $0\leq \eta\leq 1$ and $\eta\equiv 1$ on $\bar B(2/3)$.
It is known that for a.e.
$t\in (t_0-\rho^2,t_0)$, in the sense of distribution one has
\begin{equation}
				\label{pressure}
\Delta p=D_{ij}\big(u_iu_j\big).
\end{equation}
For these $t$, we consider the decomposition
$$
p=p_{x_0,\rho}+h_{x_0,\rho}\quad \text{in}\,\, B(x_0,\rho),
$$
where $p_{x_0,\rho}$ is the Newtonian potential of
$$D_{ij}\big(u_iu_j\big)\eta((x-x_0)/\rho).$$ Then $h_{x_0,\rho}$
is harmonic in $B(x_0,2\rho/3)$.

Denote $r=\gamma \rho$. By using the Calder\'on-Zygmund estimate, one has
\begin{align}
&\int_{Q(z_0,r)}|p_{x_0,\rho}(x,t)|^{(d+3)/(d+1)}\,dz\nonumber\\
&\le \int_{Q(z_0,\rho)}|p_{x_0,\rho}(x,t)|^{(d+3)/(d+1)}\,dz\nonumber\\
			\label{eq2.19}
&\le \int_{Q(z_0,\rho)}|u|^{2(d+3)/(d+1)}\,dz.
\end{align}
Since $h_{x_0,\rho}$ is harmonic in $B(x_0,2\rho/3)$, any Sobolev
norm of $h_{x_0,\rho}$ in a smaller ball can be estimated by any of
its $L_p$ norm in $B(x_0,2\rho/3)$. Thus, one obtains
\begin{align}
&\int_{B(x_0,r)}|h_{x_0,\rho}|^{(d+3)/(d+1)}\,dx\nonumber\\
&\leq Nr^{d}\sup_{B(x_0,r)}
|h_{x_0,\rho}|^{(d+3)/(d+1)}\,dx \nonumber\\
					\label{eq2.28}
&\leq
Nr^{d}\rho^{-d}\int_{B(x_0,\rho)}|h_{x_0,\rho}|^{(d+3)/(d+1)}\,dx.
\end{align}
Integrating \eqref{eq2.28} in $t\in (t_0-r^2,t_0)$, we obtain
\begin{align}
&\int_{Q(z_0,r)}|h_{x_0,\rho}|^{(d+3)/(d+1)}\,dz\nonumber\\
&\le Nr^{d}\rho^{-d}\int_{Q(z_0,\rho)}|h_{x_0,\rho}|^{(d+3)/(d+1)}\,dz\nonumber\\
&\le Nr^{d}\rho^{-d}\int_{Q(z_0,\rho)}|p|^{(d+3)/(d+1)}+|p_{x_0,\rho}|^{(d+3)/(d+1)}\,dz\nonumber\\
 			\label{eq2.32}
&\le Nr^{d}\rho^{-d}\int_{Q(z_0,\rho)}|p|^{(d+3)/(d+1)}\,dz+
N\int_{Q(z_0,\rho)}|u|^{2(d+3)/(d+1)}\,dz,
\end{align}
where we used \eqref{eq2.19} in the last inequality.
By combining \eqref{eq2.19} and \eqref{eq2.32} we reach \eqref{eq11.29}. The lemma is proved.
\end{proof}

\subsection{Strong solutions and spatial analyticity}

We recall the following local strong solvability of \eqref{NSE}-\eqref{NSE2} (see, for example,
\cite{kato}, \cite{giga2}, \cite{Taylor} and \cite{tataru}), and the spatial analyticity of strong solutions (see, for example, \cite{GigaSawada} and \cite{DongLi}).

\begin{prop}
                                    \label{prop3.53}
For any divergence-free initial data $a\in L_p(\bR^d),p\ge d$, the Cauchy problem \eqref{NSE}-\eqref{NSE2} has a unique strong solution $u\in C([0,\delta);L_p(\bR^d))$ for some $\delta>0$. Moreover,  $u$ is infinitely differentiable and spatial analytic for $t\in (0,\delta)$.
\end{prop}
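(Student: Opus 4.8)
The plan is to realize the solution as a \emph{mild solution}: applying the Leray projection $\mathbb{P}$ onto divergence-free fields and Duhamel's formula, one rewrites \eqref{NSE}--\eqref{NSE2} as the integral equation
\begin{equation*}
u(t)=e^{t\Delta}a-\int_0^t e^{(t-s)\Delta}\,\mathbb{P}\,\mathrm{div}\,\big(u(s)\otimes u(s)\big)\,ds,
\end{equation*}
and to seek a fixed point by Picard iteration. The only analytic input is the $L_r$--$L_q$ smoothing of the heat semigroup, $\|\nabla^k e^{\tau\Delta}f\|_{L_q}\le N\tau^{-k/2-\frac d2(\frac1r-\frac1q)}\|f\|_{L_r}$ for $1\le r\le q\le\infty$, together with the boundedness of $\mathbb{P}$ on $L_q$ for $1<q<\infty$. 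Once a mild solution is in hand, its smoothness and analyticity for $t>0$ will follow from the smoothing of the heat kernel by a bootstrap argument.

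For the fixed point one separates two cases. If $p>d$ the problem is subcritical: taking $u(s)\otimes u(s)\in L_{p/2}$ and using the estimate above with $k=1$, $r=p/2$, $q=p$ gives
\begin{equation*}
\Big\|\int_0^t e^{(t-s)\Delta}\mathbb{P}\,\mathrm{div}(u\otimes v)\,ds\Big\|_{L_p}\le N\int_0^t (t-s)^{-\frac12-\frac d{2p}}\|u(s)\|_{L_p}\|v(s)\|_{L_p}\,ds\le N\,\delta^{\frac12-\frac d{2p}}\,\|u\|_{X}\|v\|_{X},
\end{equation*}
where $X=C([0,\delta];L_p)$; since $\tfrac12-\tfrac d{2p}>0$ the bilinear map is a contraction on a small ball of $X$ for $\delta$ small, which yields a unique mild solution. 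If $p=d$ the problem is critical and no power of $\delta$ is gained; here I would follow Kato and work in the time-weighted space $X_\delta=\{u:\ \sup_{0<t<\delta}t^{\beta}\|u(t)\|_{L_q}<\infty,\ u\in C([0,\delta];L_d)\}$ with $q>d$ and $\beta=\frac d2(\frac1d-\frac1q)\in(0,\frac12)$, imposing also $t^{\beta}\|u(t)\|_{L_q}\to0$ as $t\to0$. The same heat estimates make the bilinear term bounded on $X_\delta$, and the additional decay condition (valid for $e^{t\Delta}a$ by approximating $a$ in $L_d$ by smooth functions) provides the smallness needed to close the contraction on a short interval. In either case uniqueness in the natural class is an immediate consequence of the same bilinear estimate and Gronwall's inequality.

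Smoothness for $t\in(0,\delta)$ is obtained by bootstrapping. Fixing $0<t_0<t<\delta$ and writing $u(t)=e^{(t-t_0)\Delta}u(t_0)-\int_{t_0}^t e^{(t-s)\Delta}\mathbb{P}\,\mathrm{div}(u\otimes u)\,ds$, the smoothing estimates show inductively that $u(t)$ is bounded in every $W^k_q$ on compact subsets of $(0,\delta)$; differentiating the equation and using $\partial_t u=\Delta u-\mathbb{P}\,\mathrm{div}(u\otimes u)$ gives the time derivatives, and the pressure is recovered from $-\Delta p=\partial_i\partial_j(u_iu_j)$. Hence $(u,p)\in C^\infty\big((0,\delta)\times\bR^d\big)$. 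Finally, for spatial analyticity I would complexify the space variable: the heat kernel $G_\tau$ extends holomorphically in $x$ with the bound $|G_\tau(x+iy)|\le N\tau^{-d/2}\exp(-c|x|^2/\tau+c|y|^2/\tau)$, so for $|y|\le c_0\sqrt t$ the Gaussian gain dominates and one may run the same Picard iteration for the complexified unknown $u(t,\cdot+iy)$ in the norm $\sup_{0<t<\delta}\sup_{|y|\le c_0\sqrt t}t^{\beta}\|u(t,\cdot+iy)\|_{L_q}$; equivalently one controls a Gevrey norm $\|e^{\sqrt t\,|\nabla|}u(t)\|$ in the spirit of Foias--Temam and Giga--Sawada. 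This shows $u(t,\cdot)$ extends to a bounded holomorphic function on the strip $\{|\operatorname{Im}z|<c_0\sqrt t\}$, hence is real-analytic. The delicate points are the critical case $p=d$, where the contraction relies entirely on Kato's time-weighted space and the vanishing-at-$t=0$ condition rather than on a power of $\delta$, and the bookkeeping in the analyticity estimate needed to let the radius grow like $\sqrt t$ while keeping the complexified norms finite; since both are by now standard (Kato; Foias--Temam; Giga--Sawada; Gruji\'c--Kukavica) we only recall the statement here.
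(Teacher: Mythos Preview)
Your sketch is correct and follows exactly the standard route: Kato's mild-solution fixed-point argument (with the time-weighted auxiliary space in the critical case $p=d$) for local existence and uniqueness, bootstrapping for smoothness, and the Foias--Temam/Giga--Sawada Gevrey-norm or complexified-Picard argument for spatial analyticity. The paper, however, does not give its own proof of this proposition at all; it simply states the result and cites \cite{kato}, \cite{giga2}, \cite{Taylor}, \cite{tataru} for the local strong solvability and \cite{GigaSawada}, \cite{DongLi} for the analyticity. Your outline is precisely a reconstruction of the arguments in those references, so there is nothing to compare---you have supplied what the paper chose to quote.
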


\mysection{A blowup procedure}
				\label{sec3}

We begin this section by proving the following key estimate, which shows if the quantities $C$ and $D$ are sufficiently small in a cylinder, then they must be also small in any sub-cylinder.

\begin{prop}
 				\label{prop3.1}
Let $(u,p)$ be a pair of suitable weak solution of \eqref{NSE}. Suppose that $Q(z_0,\rho)\subset \bR^{d+1}_T$ and the condition \eqref{eq11.11} holds. Then for any $\epsilon_0>0$ there exists an $\epsilon^*>0$ depending only on $\epsilon_0$ and $d$ such that if
$$
C(\rho,z_0)+D(\rho,z_0)\le \epsilon^*,$$
then we have
\begin{equation*}
C(r,z_1)+D(r,z_1)\le \epsilon_0
\end{equation*}
for any $z_1\in Q(z_0,\rho/2)$ and $r\in (0,\rho/2)$.
\end{prop}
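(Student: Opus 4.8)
\emph{Proof plan.} The plan is to run a scaling/iteration argument of Caffarelli--Kohn--Nirenberg type that propagates the smallness of $C+D$ from the single scale $\rho$ down to every smaller scale and to every nearby centre. First I would make two routine reductions. Since $Q(z_1,\rho/2)\subset Q(z_0,\rho)$ for every $z_1\in Q(z_0,\rho/2)$, monotonicity of the integrals defining $C$ and $D$ gives $C(\rho/2,z_1)+D(\rho/2,z_1)\le N(d)\big(C(\rho,z_0)+D(\rho,z_0)\big)$; and for $r$ in a fixed dyadic band $[\gamma r_0,r_0]$ one has $C(r,z_1)+D(r,z_1)\le \gamma^{-d+4/(d+1)}\big(C(r_0,z_1)+D(r_0,z_1)\big)$, again by monotonicity. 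Hence it suffices to fix $z_1$, fix a starting scale $R$ that is a fixed fraction of $\rho$ (so that $Q(z_1,2R)\subset\bR^{d+1}_T$), and prove: there are $\gamma_0=\gamma_0(d)\in(0,1/2]$ and, for each $\epsilon_0$, a $\delta=\delta(\epsilon_0,d,K)$ such that $C(R,z_1)+D(R,z_1)\le\delta$ forces $C(\gamma_0^k R,z_1)+D(\gamma_0^k R,z_1)\le\epsilon_0$ for all $k\ge 0$. Then one applies this with $\delta=N(d)\epsilon^\ast$.

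For the iteration I would combine three estimates. (i) Lemma~\ref{lem2.1} (the local energy inequality, together with the critical bound \eqref{eq11.11}): smallness of $C(\sigma)+D(\sigma)$ yields correspondingly small $A(\sigma/2)+E(\sigma/2)$, so that control of $C,D$ is converted into control of the energy quantities. (ii) Lemma~\ref{lem2.3}: $D(\gamma\sigma)\le N\big[\gamma^{-d+4/(d+1)}C(\sigma)+\gamma^{4/(d+1)}D(\sigma)\big]$, where the factor $\gamma^{4/(d+1)}$ in front of $D(\sigma)$ is a genuine gain, so that iterating this relation over many steps with $\gamma$ fixed contracts $D$ down to the level of $\sup_i C(\gamma^i\sigma)$. (iii) An interpolation bound for $C$: on $B(x_1,\gamma\sigma)$ write $u=[u]_{x_1,\sigma}(t)+\big(u-[u]_{x_1,\sigma}(t)\big)$, estimate the constant (mean) part by its value over the \emph{larger} ball $B(x_1,\sigma)$, which by Cauchy--Schwarz is at most $N\sigma^{-1}A(\sigma)^{1/2}$, and estimate the oscillation part by Lemma~\ref{lemmamulti} together with the Poincar\'e inequality and H\"older's inequality in $t$ (so that only the integrated Dirichlet energy $\int\!\!\int|\nabla u|^2$, not a pointwise-in-$t$ bound on $\int|\nabla u|^2$, is used). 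This leads to an estimate of the form
\[
C(\gamma\sigma,z_1)\le N\Big[\gamma^{q}A(\sigma)^{q/2}+\gamma^{-a}\big(A(\sigma)+E(\sigma)\big)^{(d+3)/(d+1)}\Big],\qquad q=\tfrac{2(d+3)}{d+1},\ a=\tfrac{(d-2)(d+3)}{d+1}.
\]
Here $A(\sigma)\le N(K)$ always, by \eqref{eq11.11} (since $\int_{B_\sigma}|u|^2\le N\sigma^{d-2}\|u(t)\|_{L_d}^2$), so the first term is at most $N(K)\gamma^{q}$ and can be made small by choosing $\gamma$ small; the second term carries the unfavourable power $\gamma^{-a}$ but is multiplied by $\big(A+E\big)^{(d+3)/(d+1)}$, which by (i) is tiny as soon as $C+D$ was tiny at the preceding scale.

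With these ingredients the iteration runs by induction on the dyadic index: granting $C+D\le\epsilon_0$ (hence, via (i), $A+E$ small) on all scales $\gamma_0^i R$ with $i\le j$, one feeds scale $\gamma_0^{j}R$ into (i) to get $A+E$ small at $\gamma_0^{j}R/2$, then into (iii) to bound $C(\gamma_0^{j+1}R)$, and finally into the iterated form of (ii) to bound $D(\gamma_0^{j+1}R)$ by $N\gamma_0^{-d+4/(d+1)}\sup_{i\le j}C(\gamma_0^i R)$ plus a term decaying in $j$; choosing first $\gamma_0$ small and then $\delta$ (hence $\epsilon^\ast$) small closes the induction. I expect the main obstacle to be exactly the bookkeeping in step (iii) and its matching with (i)--(ii): because $C$ is scale invariant, its value at the smaller scale cannot be dominated by $A,E$ at the larger scale with a gain unless the constant-in-$x$ part is split off and handled on the larger ball, while the oscillation part must be controlled using only $\int\!\!\int|\nabla u|^2$ (no pointwise-in-$t$ control of $\int|\nabla u|^2$ being available for a suitable weak solution). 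Balancing the gain $\gamma^{q}$ against the loss $\gamma^{-a}$, and arranging the order of the choices so that the smallness genuinely propagates rather than deteriorating over infinitely many scales, is the delicate point; the passage between centres and the passage from dyadic to arbitrary $r\in(0,\rho/2)$ are then routine.
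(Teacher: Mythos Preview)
Your reduction to a single-centre iteration and the combination of Lemma~\ref{lem2.1}, Lemma~\ref{lem2.3}, and an interpolation estimate for $C$ is the natural first attempt, but the iteration you outline does \emph{not} close. The difficulty is precisely the criticality of the exponents. Tracking your step~(iii): the mean part of $C(\gamma\sigma)$ contributes $N(K)\gamma^{q}$, while the oscillation part contributes $N\gamma^{-a}(A(\sigma)+E(\sigma))^{(d+3)/(d+1)}$. Feeding in Lemma~\ref{lem2.1} one gets $(A+E)^{(d+3)/(d+1)}\le N\big(C(\sigma)+D(\sigma)\big)$, which is \emph{linear} in $C+D$. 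Thus if the induction hypothesis is $C(\sigma)+D(\sigma)\le\epsilon_0$, the oscillation term is only bounded by $N\gamma^{-a}\epsilon_0$, and for this to be $\le \tfrac12\epsilon_0$ you would need $N\gamma^{-a}\le\tfrac12$, forcing $\gamma$ to be bounded below by a fixed constant---incompatible with making $N(K)\gamma^{q}\le\tfrac12\epsilon_0$ when $\epsilon_0$ is small. The same obstruction appears in Lemma~\ref{lem2.3}: with $C(\sigma),D(\sigma)\le\epsilon_0$ one gets $D(\gamma\sigma)\le N(\gamma^{-d+4/(d+1)}+\gamma^{4/(d+1)})\epsilon_0$, again only linear. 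Because every quantity here is scale invariant, no choice of exponents produces a superlinear gain, so no amount of bookkeeping rescues the induction.

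The paper avoids this by an entirely different mechanism: it proves the one-step decay (Lemma~\ref{lem3.2}) by contradiction and compactness. One rescales $v_k=u_k/\epsilon_k$, $q_k=p_k/\epsilon_k$, so that the equation becomes $\partial_t v_k-\Delta v_k+\nabla q_k=-\epsilon_k\,v_k\cdot\nabla v_k$; the bounds \eqref{eq3.33}--\eqref{eq12.05} and the Stokes coercive estimate give enough compactness to pass to a limit $(v,q)$, and the key point is that in the limit the nonlinear term \emph{disappears}, leaving the linear Stokes system \eqref{eq4.11}. Classical regularity for linear Stokes then yields $\sup_{Q(1/4)}|v|\le N$ and hence $C(\gamma,v)\le N\gamma^{4/(d+1)}$, contradicting \eqref{eq3.34} for small $\gamma$. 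Proposition~\ref{prop3.1} then follows from Lemma~\ref{lem3.2} by iteration and a covering argument. The passage to the linear limit is exactly what supplies the missing contraction that your direct argument lacks.
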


Proposition \ref{prop3.1} follows immediately from the next lemma by using a covering argument and an iteration.

\begin{lemma}
					\label{lem3.2}
Let $(u,p)$ be a pair of suitable weak solution of \eqref{NSE}. Suppose that $Q(z_0,\rho)\subset \bR^{d+1}_T$ and the condition \eqref{eq11.11} holds. Then there exist universal constants $\epsilon^*>0$ and $\gamma\in (0,1/4]$ such that for any $\epsilon\in (0,\epsilon^*]$ if
$$
C(\rho,z_0)+D(\rho,z_0)\le \epsilon,
$$
then we have
\begin{equation*}
C(\gamma\rho,z_0)+D(\gamma\rho,z_0)\le \epsilon.
\end{equation*}
\end{lemma}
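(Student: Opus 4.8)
The plan is to run the standard Caffarelli--Kohn--Nirenberg-type iteration on the coupled pair $(C,D)$, using Lemma \ref{lem2.3} to control the pressure quantity $D$ at the smaller scale and using Lemma \ref{lem2.1} together with the interpolation inequality (Lemma \ref{lemmamulti}) to control the velocity quantity $C$ at the smaller scale. By the scaling \eqref{eq11.35} and the scale-invariance of $A,E,C,D$, I may assume $\rho=1$ and $z_0=0$.

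First I would estimate $C(\gamma)$. Writing $q=2(d+3)/(d+1)\in(2,2d/(d-2))$, apply \eqref{eq11.30} on $B_\gamma$ at each time slice and integrate in $t\in(-\gamma^2,0)$; this bounds $\int_{Q(\gamma)}|u|^{q}\,dz$ by a sum of two terms, one of the form (roughly) $\int(\int_{B_\gamma}|\nabla u|^2)^{d(q/4-1/2)}(\int_{B_\gamma}|u|^2)^{q/2-d(q/4-1/2)}\,dt$ and one of the form $\gamma^{-d(q-2)/2}\int(\int_{B_\gamma}|u|^2)^{q/2}\,dt$. The exponents are chosen precisely so that, after pulling out $\esssup_t\int_{B_\gamma}|u|^2$ appropriately and using Hölder in $t$, both terms are dominated by $N\gamma^{\alpha}A(1/2)^{a}E(1/2)^{b}$ for suitable powers with $\alpha>0$ and $a+b\ge 1$ (the key bookkeeping is that the spatial scaling weights combine with the $t$-integration to produce a positive power of $\gamma$; this is exactly where the exponent $2(d+3)/(d+1)$ and the normalizing power $r^{d-4/(d+1)}$ in the definition of $C$ are tuned). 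Here one uses that $A(1/2)+E(1/2)\le N\epsilon^{2}$ by Lemma \ref{lem2.1}, which applies because $C(1)+D(1)\le\epsilon\le\epsilon^{2(d+3)/(d+1)}$ once $\epsilon^*$ is chosen small. After dividing by $(\gamma)^{d-4/(d+1)}$ one obtains a bound of the shape $C(\gamma)\le N\gamma^{\beta}(A(1/2)+E(1/2))^{(d+3)/(d+1)}+N(A(1/2)+E(1/2))^{(d+3)/(d+1)}\le N_1\epsilon^{2(d+3)/(d+1)}\le N_1\epsilon^{2}$ for $\epsilon\le 1$; alternatively, keeping the $\gamma$ dependence, $C(\gamma)\le N_1 \epsilon\,(\text{something})$, but the clean version just needs $C(\gamma)\le N_1\epsilon^{2}$.

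Next I would use Lemma \ref{lem2.3} with $\rho=1$: $D(\gamma)\le N_2\bigl[\gamma^{-d+4/(d+1)}C(1)+\gamma^{4/(d+1)}D(1)\bigr]\le N_2\gamma^{-d+4/(d+1)}\epsilon+N_2\gamma^{4/(d+1)}\epsilon$. Now fix $\gamma\in(0,1/4]$ small enough that $N_2\gamma^{4/(d+1)}\le 1/4$, then — with $\gamma$ now frozen — choose $\epsilon^*$ small enough that $N_1\epsilon^* \le 1/4$ (so $N_1\epsilon^2\le N_1\epsilon^*\epsilon\le \epsilon/4$) and $N_2\gamma^{-d+4/(d+1)}\epsilon^*\le 1/4$; combining, for any $\epsilon\in(0,\epsilon^*]$ we get $C(\gamma)+D(\gamma)\le \tfrac14\epsilon+\tfrac14\epsilon+\tfrac14\epsilon\le \epsilon$, which is the claim. (If the $C(\gamma)$ bound is only $N_1\epsilon^{2}$ without a $\gamma$ factor, this still works since $N_1\epsilon^2\le\tfrac12\epsilon$ once $\epsilon^*\le 1/(2N_1)$, and the two $D$-contributions each sit below $\tfrac14\epsilon$.)

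The main obstacle is the first step: getting a genuinely favorable (at worst, harmless) power of $\gamma$ out of the interpolation estimate for $C(\gamma)$, i.e. verifying that after applying \eqref{eq11.30}, splitting off $A(1/2)$, applying Hölder in time, and rescaling by the normalizing factor $\gamma^{d-4/(d+1)}$, the residual power of $\gamma$ is nonnegative — equivalently, that the exponent $2(d+3)/(d+1)$ in the definition of $C$ is exactly the critical one for which the $\gamma$-weights cancel. The pressure step is routine given Lemma \ref{lem2.3}, and the final choice of constants is a standard two-parameter smallness argument (fix $\gamma$ first, then $\epsilon^*$). The covering-plus-iteration deduction of Proposition \ref{prop3.1} from this lemma is then immediate: iterating the lemma gives $C(\gamma^k\rho,z_0)+D(\gamma^k\rho,z_0)\le\epsilon^*$ for all $k$, hence (by monotonicity of the cylinders and a standard comparison of $C,D$ at comparable scales centered at nearby points $z_1\in Q(z_0,\rho/2)$) the quantities can be made $\le\epsilon_0$ at all sufficiently small scales once $\epsilon^*$ is taken small relative to $\epsilon_0$.
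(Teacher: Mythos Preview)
Your approach has a genuine gap in the first step. The claimed bound $A(1/2)+E(1/2)\le N\epsilon^{2}$ from Lemma~\ref{lem2.1} is wrong: Lemma~\ref{lem2.1} requires $C(1)+D(1)\le \delta^{2(d+3)/(d+1)}$ to conclude $A+E\le N\delta^{2}$, so from $C(1)+D(1)\le\epsilon$ you only obtain $A(1/2)+E(1/2)\le N\epsilon^{(d+1)/(d+3)}$, a \emph{sublinear} power. (Your justification ``$\epsilon\le\epsilon^{2(d+3)/(d+1)}$ once $\epsilon^*$ is small'' has the inequality reversed, since the exponent exceeds $1$.) Carrying out the interpolation on $B_\gamma$ with $q=2(d+3)/(d+1)$ gives, after the bookkeeping you describe, the scale-invariant estimate
\[
C(\gamma)\le N\Bigl(A(\gamma)^{3/(d+1)}E(\gamma)^{d/(d+1)}+A(\gamma)^{(d+3)/(d+1)}\Bigr)
\]
with \emph{zero} residual power of $\gamma$---your ``critical exponent'' observation is correct, but it only says the weights cancel, not that they are favorable. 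Since $A(\gamma)\le (2\gamma)^{-(d-2)}A(1/2)$ and likewise for $E$, substituting yields $C(\gamma)\le N_\gamma\bigl(A(1/2)+E(1/2)\bigr)^{(d+3)/(d+1)}\le N_\gamma\,\epsilon$, with $N_\gamma\to\infty$ as $\gamma\to 0$. Thus you only get $C(\gamma)+D(\gamma)\le M_\gamma\,\epsilon$ with $M_\gamma>1$ for every admissible $\gamma$, and the iteration cannot close. The superlinear gain you need is simply not available by direct interpolation at this critical scaling.

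The paper's proof is fundamentally different: it argues by contradiction and compactness. Assuming a sequence $(u_k,p_k)$ violating the conclusion with $\epsilon_k\to 0$, one rescales $v_k=u_k/\epsilon_k$, $q_k=p_k/\epsilon_k$, and uses Lemma~\ref{lem2.1}, interpolation, and the coercive Stokes estimate to extract a limit $(v,q)$. The key point is that $(v_k,q_k)$ solves $\partial_t v_k+\epsilon_k\, v_k\cdot\nabla v_k-\Delta v_k+\nabla q_k=0$, so the limit satisfies the \emph{linear} Stokes system; linear regularity then gives $\sup_{Q(z_0,1/4)}|v|\le N$, hence $C(\gamma,z_0,v)\le N\gamma^{4/(d+1)}$, which together with Lemma~\ref{lem2.3} contradicts $C(\gamma,v_k,q_k)+D(\gamma,v_k,q_k)>1$ for $\gamma$ small and $k$ large. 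The compactness/linearization step is precisely what manufactures the decay in $\gamma$ that your direct argument cannot produce.
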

\begin{proof}
As before, one may assume $\rho=1$. We prove by contradiction. Let $\gamma\in (0,1/4]$ be a constant to be specified later. Suppose there exist a decreasing sequence $\{\epsilon_k\}$ converging to $0$, and a sequence of pairs of suitable weak solutions $(u_k,p_k)$ such that
\begin{align}
 					\label{eq3.28}
C(1,z_0,u_k,p_k)+D(1,z_0,u_k,p_k)&\le \epsilon_k^{2(d+3)/(d+1)},\\
					\label{eq3.29}
C(\gamma,z_0,u_k,p_k)+D(\gamma,z_0,u_k,p_k)&> \epsilon_k^{2(d+3)/(d+1)}.
\end{align}
By Lemma \ref{lem2.1}, one also has
\begin{equation}
 					\label{eq3.31}
A(1/2,z_0,u_k,p_k)+B(1/2,z_0,u_k,p_k)\le N \epsilon_k^{2},
\end{equation}
where the constant $N$ is independent of $k$.

We define $(v_k,q_k)=(u_k/\epsilon_k,q_k/\epsilon_k)$. Then $(v_k,q_k)$ is a suitable weak solution of
\begin{equation}
				\label{eq4.08}
\partial_t v_k+\epsilon_k v_k\cdot\nabla v_k-\Delta v_k+\nabla q_k=0,\quad \text{div}\, v_k=0.
\end{equation}

From \eqref{eq3.28},
\eqref{eq3.29} and \eqref{eq3.31}, we get
\begin{align}
  					\label{eq3.33}
 C(1,z_0,v_k,q_k)+D(1,z_0,v_k,q_k)&\le 1,\\
					\label{eq3.34}
C(\gamma,z_0,v_k,q_k)+D(\gamma,z_0,v_k,q_k)&> 1,\\
					\label{eq12.05}
A(1/2,z_0,v_k,q_k)+B(1/2,z_0,v_k,q_k)&\le N.
\end{align}
By using \eqref{eq12.05}, applying the interpolation inequality \eqref{eq11.30} with $q=2(d+2)/d$ and integrating in $t$, we bound $\|v_k\|_{L_{2(d+2)/d}(Q(z_0,1/2))}$ by $N$. Thus by the H\"older's inequality,
$$
\|v_k\cdot\nabla v_k\|_{L_{(d+2)/(d+1)}(Q(z_0,1/2))}\le N
$$
Due to the coercive estimate for the Stokes system (see, for instance, \cite{MareSolo}) with a suitable cut-off function, we reach
\begin{equation*}
\int_{Q(z_0,1/3)}\left(|v_k|^{\frac{2(d+2)} d}+|\partial_t v_k|^{\frac {d+2} {d+1}}+|D^2 v_k|^{\frac {d+2} {d+1}}+|\nabla q_k|^{\frac {d+2} {d+1}}\right)\,dz\le N,
\end{equation*}
where the constant $N$ is independent of $k$. Thanks the compact embedding theorem and \eqref{eq3.33}, there exist
$$
v\in L_{2(d+3)/(d+1)}(Q(z_0,1/3)),\quad
q\in  L_{(d+3)/(d+1)}(Q(z_0,1/3)),
$$
and a subsequence, which is still denoted by $(v_k,q_k)$ such that
\begin{align}
                        \label{eq8.44}
v_k&\to v\,\, \text{in}\,\, L_{2(d+3)/(d+1)}(Q(z_0,1/3)),\\                        q_k&\rightharpoonup q\,\, \text{in}\,\, L_{(d+3)/(d+1)}(Q(z_0,1/3)).\nonumber
\end{align}
This together with \eqref{eq4.08} implies
\begin{equation}
                            \label{eq4.11}
\partial_t v-\Delta v+\nabla q=0,\quad \text{div}\, v=0.
\end{equation}
Moreover,
$$
\|v\|_{L_{2(d+3)/(d+1)}(Q(z_0,1/3))}
+\|q\|_{L_{(d+3)/(d+1)}(Q(z_0,1/3))}\le N.
$$
By the classical estimate of the Stokes system, one has
$$
\sup_{Q(z_0,1/4)}|v|\le N,
$$
which gives
$$
C(\gamma,z_0,v,q)\le N\gamma^{4/(d+1)}.
$$
This contradicts \eqref{eq3.34} and \eqref{eq8.44}, if we choose $\gamma$ sufficiently small. The lemma is proved.
\end{proof}

\begin{lemma}
				\label{lem3.3}
Under the assumptions of Theorem \ref{thm1}, we have
\begin{equation}
				\label{eq5.25}
\|u(t,\cdot)\|_{L_d(\bR^d)}\le N,
\end{equation}
for each $t\in [0,T]$,
and
\begin{equation}
				\label{eq5.28}
u\in L_4(\bR^{d+1}_T),\quad \partial_t u,D^2u,\nabla p\in L_{4/3}((\delta,T)\times \bR^d),
\end{equation}
for any $\delta\in (0,T)$. Moreover, $(u,p)$ is a pair of suitable weak solution of \eqref{NSE} in $\bR^{d+1}_T$.
\end{lemma}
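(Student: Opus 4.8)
The plan is to bootstrap from the a priori bound \eqref{eq11.11} together with the defining properties of a Leray-Hopf weak solution, establishing the three assertions in turn. First, for the pointwise-in-time bound \eqref{eq5.25}: by \eqref{eq11.11} we have $\|u(t,\cdot)\|_{L_d}\le K$ for a.e.\ $t$, and to promote this to every $t_0\in[0,T]$ I would fix a sequence $t_k\to t_0$ with $\|u(t_k,\cdot)\|_{L_d}\le K$; since $L_d$ is reflexive for $d\ge 3$, a subsequence converges weakly in $L_d$, and testing against $C_0^\infty$ functions together with property (ii) of the definition of a Leray-Hopf weak solution (or the $L_2$ convergence $u(t,\cdot)\to a$ as $t\to 0$, when $t_0=0$) identifies the weak limit with $u(t_0,\cdot)$. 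Weak lower semicontinuity of the norm then yields $\|u(t_0,\cdot)\|_{L_d}\le K$; in particular $a\in L_d(\bR^d)$.

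Next, for the global $L_4$ bound: since $u\in L_\infty(0,T;\dot J)\cap L_2(0,T;\dot J^1_2)$, the Sobolev inequality gives $u\in L_2(0,T;L_{2d/(d-2)}(\bR^d))$ with $\int_0^T\|u(t,\cdot)\|_{L_{2d/(d-2)}}^2\,dt\le N\|\nabla u\|^2_{L_2(\bR^{d+1}_T)}$. Interpolating this with $u\in L_\infty(0,T;L_d)$ from the previous paragraph, via the identity $\tfrac14=\tfrac12\cdot\tfrac1d+\tfrac12\cdot\tfrac{d-2}{2d}$, gives $\|u(t,\cdot)\|_{L_4}^4\le K^2\,\|u(t,\cdot)\|^2_{L_{2d/(d-2)}}$, and integrating in $t$ shows $u\in L_4(\bR^{d+1}_T)$.

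For the parabolic regularity \eqref{eq5.28} and the suitable-weak-solution property: from $u\in L_4(\bR^{d+1}_T)$ and $\nabla u\in L_2(\bR^{d+1}_T)$, H\"older's inequality ($\tfrac34=\tfrac14+\tfrac12$) gives $u\cdot\nabla u=\text{div}(u\otimes u)\in L_{4/3}(\bR^{d+1}_T)$. Choosing the pressure as $p(t,\cdot)=-(-\Delta)^{-1}\partial_i\partial_j(u_iu_j)(t,\cdot)$ and using the Calder\'on-Zygmund estimate, one gets $\|p(t,\cdot)\|_{L_{d/2}}\le N\|u(t,\cdot)\|_{L_d}^2\le NK^2$, so $p\in L_\infty(0,T;L_{d/2})\subset L_{d/2}(\bR^{d+1}_T)$, and likewise $\nabla p\in L_{4/3}(\bR^{d+1}_T)$. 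Rewriting \eqref{NSE} as the non-stationary Stokes system $\partial_t u-\Delta u+\nabla p=-u\cdot\nabla u$ and applying its coercive estimate (cf.\ \cite{MareSolo}) to $\chi u$, where $\chi=\chi(t)$ is a cutoff vanishing near $t=0$ and equal to $1$ for $t>\delta$ (so that the initial value vanishes and the only new forcing is $\chi'(t)u$, which after an additional spatial cutoff lies in $L_{4/3}$), one obtains $\partial_t u,D^2u,\nabla p\in L_{4/3}((\delta,T)\times\bR^d)$. Finally, property (i) of the definition of a suitable weak solution holds because $u$ is in the energy class and $p\in L_{d/2}$, property (ii) is the distributional identity \eqref{eq10.11} already satisfied by $u$, and property (iii) follows by multiplying \eqref{NSE} --- which now holds a.e.\ on $(\delta,T)\times\bR^d$ --- by $u\psi$ and integrating by parts: every product that appears ($u\,\partial_t u$, $|\nabla u|^2$, $|u|^3$, $pu$) is locally integrable because the exponents $4$ and $4/3$ are conjugate and $p\in L_{d/2}$, and letting $\delta\to0$ (legitimate since $\psi$ vanishes near $t=0$) produces \eqref{energy}.

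The step I expect to be the main obstacle is this last one: one must place $u\cdot\nabla u$ and the cutoff-generated commutator terms in precisely the Lebesgue spaces for which the Stokes coercive estimate is available, and then verify that the regularity so obtained is exactly what is needed to legitimize every integration by parts in the derivation of the local energy inequality \eqref{energy} --- the duality between the exponents $4$ and $4/3$ being what makes both of these go through.
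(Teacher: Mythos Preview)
Your argument is correct and follows essentially the same route as the paper's own proof: weak continuity plus \eqref{eq11.11} for \eqref{eq5.25}; Sobolev/interpolation (the paper invokes Lemma~\ref{lemmamulti} with $q=2d/(d-2)$, $r=\infty$) together with H\"older to get $u\in L_4$ and $u\cdot\nabla u\in L_{4/3}$; the coercive Stokes estimate for \eqref{eq5.28}; and Calder\'on--Zygmund on \eqref{pressure} for $p\in L_\infty^tL_{d/2}^x$, after which the suitable-weak-solution axioms are immediate. You supply considerably more detail than the paper, which dispatches the last two steps in a single sentence each; in particular your explicit verification of the local energy inequality via the $4$--$4/3$ duality is exactly the mechanism the paper has in mind when it writes ``it is clear that $(u,p)$ is a suitable weak solution.''

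One small remark on the point you flag as the obstacle: the additional spatial cutoff you invoke to place $\chi'(t)u$ in $L_{4/3}$ would, strictly speaking, only yield $\partial_t u,\,D^2u\in L_{4/3,\mathrm{loc}}$ rather than the global $L_{4/3}((\delta,T)\times\bR^d)$ stated in \eqref{eq5.28}. The paper's proof does not address this either, and for every subsequent use of the lemma (Remark~\ref{rem3.3}, Proposition~\ref{prop3.4}, and the derivation of the local energy inequality) the local version suffices. Alternatively, since $\nabla p\in L_{4/3}$ already follows from the Riesz-transform representation, one may treat \eqref{NSE} as a heat equation with forcing $-u\cdot\nabla u-\nabla p\in L_{4/3}$ and isolate the linear flow $e^{t\Delta}a$, whose contribution is smooth on $(\delta,T)$.
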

\begin{proof}
The first assertion is due to \eqref{eq11.11} and the weak continuity of Leray-Hopf weak solutions. By using Lemma \ref{lemmamulti} with $q=2d/(d-2)$ and $r=\infty$, we have
\begin{equation*}
\|u\|_{L_2^tL_{2d/(d-2)}^x(\bR^{d+1}_T)}\le N,
\end{equation*}
which together with \eqref{eq5.25} and the H\"older's inequality yields
$$
\|u\|_{L_4(\bR^{d+1}_T)}\le N,\quad \|u\cdot \nabla u\|_{L_{4/3}(\bR^{d+1}_T)}\le N.
$$
Thus, \eqref{eq5.28} follows from the coercive estimate for the Stokes system. Finally, due to the pressure equation \eqref{pressure} and the Calder\'on-Zygmund estimate, $p\in L_\infty^tL_{d/2}^x(\bR^{d+1}_T)$. Therefore, it is clear that $(u,p)$ is a suitable weak solution. The lemma is proved.
\end{proof}

\begin{remark}
                        \label{rem3.3}
From \eqref{eq5.28}, one can infer that $u\in C((0,T];L_{4/3}(B_R))$ for any $R>0$. This combined with \eqref{eq5.25} and the H\"older's inequality, we get $u\in C((0,T];L_{p}(B_R))$ for any $p\in [1,d)$.
\end{remark}

Because of the local strong solvability and the weak-strong uniqueness (see, for instance, \cite{Wahl}), we know that $u$ is regular for $t\in (0,T_0)$ for some $T_0\in (0,T]$. Suppose $T_0$ is the first blowup time of $u$, and $Z_0=(T_0,X_0)$ is a singular point. Take a decreasing sequence $\{\lambda_k\}$ converging to $0$. We rescale the pair $(u,p)$ at time $T_0$ and define
\begin{align*}
u_k(t,x)&=\lambda_ku(T_0+\lambda_k^2t,X_0+\lambda_k x),\\
p_k(t,x)&=\lambda_k^2 p(T_0+\lambda_k^2t,X_0+\lambda_k x).
\end{align*}
Then for each $k=1,2,\cdots$, $(u_k,p_k)$ is a suitable weak solution of \eqref{NSE} and $u_k$ is smooth for $t\in (-\lambda_k^{-2}T_0,0)$.

We finish this section by constructing a limiting solution.
\begin{prop}
 					\label{prop3.4}
i) There is a subsequence of $\{(u_k,p_k)\}$, which is still denoted by $\{(u_k,p_k)\}$, such that
\begin{align}
                        \label{eq8.50}
u_k&\to u_\infty\,\, \text{in}\,\, C([t_0-1,t_0];L_{q_1}(B(x_0,1))),\\                        p_k&\rightharpoonup p_\infty\,\, \text{in}\,\, L_{q_2}^tL_{d/2}^x(Q(z_0,1)).\label{eq8.53}
\end{align}
for any $z_0\in (-\infty,0]\times \bR^d$, $q_1\in [1,d)$ and $q_2\in [1,\infty)$.

ii) Furthermore, $(u_\infty,p_\infty)$ is a suitable weak solution of \eqref{NSE} in $(-\infty,0)\times \bR^d$, and
$$
u_\infty\in L_{q_2}^tL_{d}^x((-T_1,0)\times\bR^d),
\quad p_\infty\in L_{q_2}^tL_{d/2}^x((-T_1,0)\times\bR^d).
$$
for any $T_1>0$ and $q_2\in [1,\infty)$.
\end{prop}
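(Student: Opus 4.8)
\smallskip
\noindent\textbf{Sketch of the argument.} The plan has three parts: (i) derive bounds for $(u_k,p_k)$ on every fixed compact parabolic cylinder that are uniform in $k$; (ii) extract a subsequence converging in the senses of \eqref{eq8.50}--\eqref{eq8.53} by a compactness argument; (iii) check that the limit is a suitable weak solution with the stated integrability.

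\emph{Uniform estimates.} Fix $z_0=(t_0,x_0)$ with $t_0\le 0$ and $\rho>0$. As $\lambda_k\to 0$, for all large $k$ the cylinder $Q(z_0,\rho)$ lies in the domain $(-\lambda_k^{-2}T_0,0)\times\bR^d$ of the suitable weak solution $(u_k,p_k)$. Since $\sup_t\|u(t,\cdot)\|_{L_d(\bR^d)}$ and $\sup_t\|p(t,\cdot)\|_{L_{d/2}(\bR^d)}$ are invariant under \eqref{eq11.35}, the hypothesis \eqref{eq11.11} and Lemma \ref{lem3.3} give $\|u_k(t,\cdot)\|_{L_d(\bR^d)}\le K$ and $\|p_k(t,\cdot)\|_{L_{d/2}(\bR^d)}\le N(K,d)$ for all admissible $t$. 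Because $2(d+3)/(d+1)\le d$ and $(d+3)/(d+1)\le d/2$ for $d\ge3$, H\"older's inequality in $x$ bounds the quantities $A(\rho,z_0)$, $C(\rho,z_0)$ and $D(\rho,z_0)$ associated with $(u_k,p_k)$ by a constant depending only on $d,K,\rho$, and then the local energy inequality \eqref{energy} with a cut-off, used exactly as in the proof of Lemma \ref{lem2.1}, bounds $E(\rho/2,z_0)$ similarly. Thus $u_k$ is bounded in $L_\infty^tL_2^x\cap L_2^tW^1_2$ on $Q(z_0,\rho/2)$; Lemma \ref{lemmamulti} with $q=2(d+2)/d$ then bounds $u_k$ in $L_{2(d+2)/d}(Q(z_0,\rho/2))$, so $u_k\otimes u_k$ is bounded there in $L_{(d+2)/d}$. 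Writing $\partial_tu_k=\Delta u_k-\text{div}(u_k\otimes u_k)-\nabla p_k$ and using $\nabla u_k\in L_2$, $u_k\otimes u_k\in L_{(d+2)/d}$, and $p_k\in L_\infty^tL_{d/2}^x$, we obtain a bound for $\partial_tu_k$ in $L_r^tW^{-1}_{s}$ on $Q(z_0,\rho/4)$ with some fixed $r\in(1,2]$ and $s\in(1,d/(d-1)]$ depending only on $d$. All constants here are independent of $k$.

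\emph{Compactness and extraction.} Fix a countable family of cylinders $Q(z^{(j)},\rho_j)$ such that every set $[t_0-1,t_0]\times B(x_0,1)$ lies in one of them. On each $Q(z^{(j)},\rho_j)$ the bound on $\partial_tu_k$ in $L_r^tW^{-1}_{s}$ with $r>1$ makes $\{u_k\}$ equicontinuous in time with values in $W^{-1}_{s}(B(z^{(j)},\rho_j))$, while for each fixed $t$ the slices $\{u_k(t,\cdot)\}$ are bounded in $L_d(B(z^{(j)},\rho_j))$ and hence precompact in $W^{-1}_{s}(B(z^{(j)},\rho_j))$, since on a bounded domain the embedding $L_d\hookrightarrow W^{-1}_{s}$ is compact. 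By the Arzel\`a--Ascoli theorem and a diagonal argument there are a subsequence (still denoted $(u_k,p_k)$) and functions $u_\infty$, $p_\infty$ such that $u_k\to u_\infty$ uniformly in time with values in $W^{-1}_{s}$ on each $Q(z^{(j)},\rho_j)$ and $p_k\rightharpoonup p_\infty$ weakly-$*$ in $L_\infty^tL_{d/2}^x$. As $u_k$ is bounded in $L_\infty^tL_d^x$, so is $u_\infty$, and the Gagliardo--Nirenberg inequality $\|f\|_{L_{q_1}}\le N\|f\|_{L_d}^{\theta}\|f\|_{W^{-1}_{s}}^{1-\theta}$ (available for each $q_1\in(1,d)$ with a suitable $\theta=\theta(d,q_1)\in(0,1)$, the case $q_1=1$ following by $L_{q'}\hookrightarrow L_1$ on the ball) upgrades this to $u_k\to u_\infty$ in $C([t_0-1,t_0];L_{q_1}(B(x_0,1)))$ for every $z_0$ and every $q_1\in[1,d)$, which is \eqref{eq8.50}. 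For \eqref{eq8.53}, a sequence bounded in $L_\infty^tL_{d/2}^x$ converges along this subsequence weakly in $L_{q_2}^tL_{d/2}^x$ over any compact set, for every $q_2\in[1,\infty)$.

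\emph{Identification of the limit, and the main obstacle.} Interpolating the $C_tL_{q_1}^x$ convergence (any $q_1<d$) with the bounds in $L_\infty^tL_d^x$ and $L_{2(d+2)/d}$ gives $u_k\to u_\infty$ strongly in $L_m(Q(z_0,\rho))$ for every $m<\max(d,2(d+2)/d)$, in particular for $m=3$ and $m=d/(d-2)$; moreover $\nabla u_k\rightharpoonup\nabla u_\infty$ in $L_2(Q(z_0,\rho))$. Passing to the limit in \eqref{eq10.11} (with $u_k\otimes u_k\to u_\infty\otimes u_\infty$ in $L_{(d+2)/d}$) shows $(u_\infty,p_\infty)$ solves \eqref{NSE} in the sense of distributions. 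In \eqref{energy}, the terms $\int|u_k|^2(\psi_t+\Delta\psi)$ and $\int|u_k|^2u_k\cdot\nabla\psi$ pass to the limit by the strong $L_2$ and $L_3$ convergence; the term $\int p_ku_k\cdot\nabla\psi$ does so after writing $p_ku_k=p_k(u_k-u_\infty)+p_ku_\infty$, the first part being $O\big(\|p_k\|_{L_2^tL_{d/2}^x}\|u_k-u_\infty\|_{L_2^tL_{d/(d-2)}^x}\big)\to0$ and the second converging by weak convergence of $p_k$ tested against $u_\infty\cdot\nabla\psi\in L_2^tL_{d/(d-2)}^x$; and $\esssup_t\int|u_\infty|^2\psi$ together with $\int|\nabla u_\infty|^2\psi$ are bounded by the corresponding liminf's for $u_k$ by weak lower semicontinuity. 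Hence $(u_\infty,p_\infty)$ is a suitable weak solution on $(-\infty,0)\times\bR^d$; passing the bounds $\|u_k(t,\cdot)\|_{L_d}\le K$, $\|p_k(t,\cdot)\|_{L_{d/2}}\le N$ to the limit (weak lower semicontinuity in $x$, then exhausting $\bR^d$) and integrating over $(-T_1,0)$ yields the integrability in (ii). The crux of the whole proof is the compactness step: the $L_\infty^tL_d^x$ bound gives no spatial compactness, so one must first gain continuity in time into a negative-order space (where $L_d$ does embed compactly) and then recover strong convergence up to exponents arbitrarily close to the critical value $d$ via a Gagliardo--Nirenberg inequality that couples $L_d$ with a negative Sobolev norm; this is precisely where the hypothesis \eqref{eq11.11} is used essentially. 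A secondary point demanding care is matching the integrability exponents so that every term of \eqref{energy} passes to the limit with the right mode of convergence.
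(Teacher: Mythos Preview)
Your overall architecture---uniform scale-invariant bounds on every fixed cylinder, local compactness, a Cauchy diagonal extraction, and passage to the limit in \eqref{eq10.11} and \eqref{energy}---matches the paper's, and your treatment of how each term of the local energy inequality passes to the limit is in fact more explicit than the paper's one-line dispatch of part~(ii). However, the compactness step contains a genuine gap.

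The interpolation inequality you invoke,
\[
\|f\|_{L_{q_1}(B)}\le N\,\|f\|_{L_d(B)}^{\theta}\,\|f\|_{W^{-1}_s(B)}^{1-\theta},\qquad \theta\in(0,1),
\]
is false: any interpolation space between $L_d$ (regularity $0$) and $W^{-1}_s$ (regularity $-1$) has strictly negative smoothness and cannot contain a Lebesgue space. Concretely, take $f_n(x)=\phi(x)\sin(nx_1)$ with $\phi\in C_c^\infty(B)$; writing $f_n=-n^{-1}\partial_{x_1}\bigl(\phi\cos(nx_1)\bigr)+n^{-1}(\partial_{x_1}\phi)\cos(nx_1)$ shows $\|f_n\|_{W^{-1}_s}=O(1/n)\to 0$, while $\|f_n\|_{L_p}\to c_p\|\phi\|_{L_p}>0$ for every finite $p$. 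Thus your Arzel\`a--Ascoli argument, which does legitimately produce $u_k\to u_\infty$ in $C([t_0-1,t_0];W^{-1}_s(B(x_0,1)))$, cannot be upgraded to $C_tL_{q_1}^x$ using only the $L_\infty^tL_d^x$ bound: that bound carries no spatial compactness whatsoever, and convergence in a negative-order norm is blind to oscillation.

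The paper closes this gap by importing genuine spatial regularity through the coercive estimate for the Stokes system. From $u_k\cdot\nabla u_k$ bounded in $L_{4/3}$ (interpolating $L_\infty^tL_d^x$ with $L_2^tW^{1}_2$, exactly as in Lemma~\ref{lem3.3}) one obtains $\partial_t u_k$, $D^2u_k$, $\nabla p_k$ uniformly bounded in $L_{4/3}(Q(z_0,3/2))$. The parabolic embedding of $W^{2,1}_{4/3}$ then yields compactness of $\{u_k\}$ in $C([t_0-1,t_0];L_{4/3}(B(x_0,1)))$ directly, after which plain H\"older interpolation against the uniform $L_\infty^tL_d^x$ bound gives \eqref{eq8.50} for every $q_1<d$. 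If you want to salvage your route without maximal regularity for Stokes, you must feed in positive spatial smoothness at the pointwise-in-$t$ precompactness step---for instance by first showing $u_k$ is bounded in $C_tX$ for some $X$ compactly embedded in $L_{q_1}$; the $L_2^tW^1_2$ bound alone does not do this, which is precisely why the paper invokes the Stokes estimate.
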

\begin{proof}
First we fix a $z_0\in (-\infty,0]\times \bR^d$. Since $p_k,k=1,2,\cdots$ have a uniform bound of the $L_{\infty}^tL_{d/2}^x((t_0-1,t_0)\times\bR^d)$ norm, and consequently  a uniform bound of their $L_{q_2}^tL_{d/2}^x(Q(z_0,1))$ norms, there is a subsequence, which is still denoted by $\{p_k\}$, such that \eqref{eq8.53} holds. Similarly,
\begin{equation}
                            \label{eq9.17}
\|u_k\|_{L_\infty^t L_d^x(Q(z_0,3))}\le
\|u_k\|_{L_\infty^t L_d^x((t_0-9,t_0)\times\bR^d)}\le N,
\end{equation}
where $N$ is independent of $k$. By Lemma \ref{lem2.1}, we have
$$
A(2,z_0,u_k,p_k)+B(2,z_0,u_k,p_k)\le N.
$$
Now following the proof of Lemma \ref{lem3.3}, we deduce
\begin{equation*}
u_k\in L_4(Q(z_0,3/2)),\quad \partial_t u_k,D^2u_k,\nabla p_k\in L_{4/3}(Q(z_0,3/2))
\end{equation*}
with uniform norms. Therefore, we can find a subsequence still denoted by $\{u_k\}$ such that
$$
u_k\to u_\infty\,\, \text{in}\,\, C([t_0-1,t_0];L_{4/3}(B(x_0,1))).
$$
This together with \eqref{eq9.17} gives \eqref{eq8.50} by using the H\"older's inequality. To finish the proof of Part  i), it suffices to use a Cauchy diagonal argument. Part ii) then follows from  Part i) and \eqref{eq9.17}.
\end{proof}

\mysection{Schoen's trick}		\label{sec4}

The objective of this section is to establish the following regularity criterion.

\begin{theorem}
                            \label{thm4.1}
Suppose $u$ is a regular solution of \eqref{NSE} in $Q(z_0,\rho_1)$. Then for any $K>0$ there exists an $\epsilon_1=\epsilon_1(d,K)>0$ such that following is true. If any  $z_1\in Q(z_0,\rho_1/2),\rho\in (0,\rho_1/2)$ we
have
\begin{equation}
                                    \label{eq18.10.21}
C(\rho,z_1)\leq \epsilon_1,\quad
\|p\|_{L^t_\infty L^x_{d/2}(Q(z_1,\rho))}\leq K
\end{equation}
then
\begin{equation*}
\sup_{Q(z_0,\rho_1/4)}|u(z)|<N(\rho_1,d).
\end{equation*}
\end{theorem}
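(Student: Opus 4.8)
The plan is to run a Schoen-type point-picking argument --- the parabolic analogue of the interior gradient estimate for harmonic maps --- and to extract a contradiction from a blown-up limit. By the natural scaling \eqref{eq11.35}, under which both $C(\rho,z_1)$ and the norm $\|p\|_{L_\infty^tL_{d/2}^x}$ over parabolic cylinders are invariant, we may take $\rho_1=1$ and $z_0=0$; the asserted bound then acquires a factor $\rho_1^{-1}$ and is allowed to depend on $K$. Suppose the conclusion fails. Then there are a fixed $K>0$ and regular solutions $(u_k,p_k)$ of \eqref{NSE} on $Q(0,1)$ such that, for all $z_1\in Q(0,1/2)$ and $\rho\in(0,1/2)$,
\begin{equation}\label{eq:pp1}
C(\rho,z_1,u_k)\le 1/k,\qquad \|p_k\|_{L_\infty^tL_{d/2}^x(Q(z_1,\rho))}\le K,
\end{equation}
while $M_k:=\sup_{Q(0,1/4)}|u_k|\to\infty$.

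Next I would carry out the point-picking. For $z\in\overline{Q(0,1/2)}$ set $\delta(z)=\sup\{r>0:Q(z,r)\subset Q(0,1/2)\}$ and $F_k(z)=\delta(z)\,|u_k(z)|$. Since $u_k$ is continuous on $\overline{Q(0,1/2)}$ and $F_k$ vanishes on the lateral and bottom part of the boundary, $F_k$ attains its maximum at some $\bar z_k$ with $\bar\delta_k:=\delta(\bar z_k)>0$; put $N_k:=|u_k(\bar z_k)|$ and $A_k:=\bar\delta_kN_k$. As $\delta\ge c_0>0$ on $Q(0,1/4)$ for a universal $c_0$, we get $A_k\ge c_0M_k\to\infty$. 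Since $Q(z',s)\subset Q(z,\delta(z))$ whenever $z'\in Q(z,\delta(z)-s)$, one has $\delta\ge\bar\delta_k/2$ on $Q(\bar z_k,\bar\delta_k/2)$, so $F_k\le A_k$ there forces
\begin{equation}\label{eq:pp2}
\sup_{Q(\bar z_k,\bar\delta_k/2)}|u_k|\le 2N_k.
\end{equation}
I then rescale by $N_k$ about $\bar z_k$, setting $v_k(t,x)=N_k^{-1}u_k(\bar t_k+N_k^{-2}t,\bar x_k+N_k^{-1}x)$ and $q_k$ accordingly. Then $v_k$ is a regular solution of \eqref{NSE} on $Q(0,R_k)$ with $R_k=A_k/2\to\infty$, $|v_k(0,0)|=1$, and, by \eqref{eq:pp2}, $\sup_{Q(0,R_k)}|v_k|\le 2$. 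Moreover $C$ and $\|\cdot\|_{L_\infty^tL_{d/2}^x}$ are scale invariant, and since $A_k\to\infty$ every fixed cylinder $Q(z,r)$ gets mapped into $Q(0,1/2)$ once $k$ is large, so \eqref{eq:pp1} yields
\begin{equation}\label{eq:pp3}
C(r,z,v_k)\le 1/k,\qquad \|q_k\|_{L_\infty^tL_{d/2}^x(Q(z,r))}\le K
\end{equation}
for every fixed $z,r$ and all large $k$.

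Finally I would pass to the limit. From $\|v_k\|_{L_\infty(Q(0,R_k))}\le 2$ and the uniform pressure bound in \eqref{eq:pp3}, interior regularity for the Navier--Stokes equations gives, on every fixed compact subset of $(-\infty,0]\times\bR^d$, bounds for $v_k$ in $C^\alpha$ --- and, by bootstrapping, in $C^\infty$ --- that are independent of $k$. Hence a subsequence of $v_k$ converges locally uniformly to a limit $v_\infty$ with $|v_\infty(0,0)|=1$. On the other hand, this convergence is strong in $L_{2(d+3)/(d+1)}(Q(0,1))$, so by \eqref{eq:pp3} with $z=0,r=1$,
\[
\int_{Q(0,1)}|v_\infty|^{2(d+3)/(d+1)}\,dz\le\liminf_{k\to\infty}1/k=0,
\]
whence $v_\infty\equiv0$ on $Q(0,1)$ and, by continuity up to $t=0$, $v_\infty(0,0)=0$ --- contradicting $|v_\infty(0,0)|=1$. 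Undoing the initial scaling, this proves the theorem.

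The step I expect to be the main obstacle is precisely the uniform interior regularity of $\{v_k\}$, because the pressures are controlled only in $L_\infty^tL_{d/2}^x$. On a slightly smaller cylinder one would split $q_k=q_k^{(1)}+q_k^{(2)}$, with $q_k^{(1)}$ the Newtonian potential in $x$ of the right-hand side of \eqref{pressure} (with $u$ replaced by $v_k$) multiplied by a cutoff, so that by the Calder\'on--Zygmund estimate $\|q_k^{(1)}\|_{L_\infty^tL_p^x}$ is bounded for every $p<\infty$ in terms of $\|v_k\|_{L_\infty}$, and $q_k^{(2)}=q_k-q_k^{(1)}$ harmonic in $x$ on the smaller cylinder, hence controlled in every local norm by $\|q_k^{(2)}\|_{L_\infty^tL_{d/2}^x}\le\|q_k\|_{L_\infty^tL_{d/2}^x}+\|q_k^{(1)}\|_{L_\infty^tL_{d/2}^x}$. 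Then $\partial_tv_k-\Delta v_k=-v_k\cdot\nabla v_k-\nabla q_k$ has a right-hand side bounded locally in $L_\infty^tL_p^x$ for all $p$, and the local ($L_p$) coercive estimate for the Stokes system together with the usual bootstrap yields the $k$-independent interior bounds. Keeping all constants independent of $k$ --- and, after undoing the scaling, of $\rho_1$ --- is the heart of the matter.
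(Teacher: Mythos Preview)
Your argument is correct and shares the paper's two essential ingredients: the Schoen point-picking rescaling and the pressure decomposition $q=q^{(1)}+q^{(2)}$ (Calder\'on--Zygmund part plus harmonic remainder) that upgrades the $L_\infty^tL_{d/2}^x$ control on $q_k$ to a high $L_p$ bound on smaller cylinders, so that parabolic regularity gives uniform H\"older bounds on the rescaled solutions.

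The one structural difference worth noting is how the contradiction is extracted. The paper does \emph{not} introduce a sequence of solutions or pass to a limit. It performs a single point-picking rescaling to obtain $\bar u$ with $|\bar u(0,0)|=1$ and $\sup_{Q(0,1)}|\bar u|\le 2$; the pressure decomposition then yields a H\"older bound $\|\bar u\|_{C^{1/4}(Q(0,1/4))}\le N(d,K)$ with constant independent of $\epsilon_1$. Hence there is a fixed $\delta_1=\delta_1(d,K)$ with $|\bar u|\ge 1/2$ on $Q(0,\delta_1)$, forcing $C(\delta_1)\ge c(d,K)>0$, which contradicts the hypothesis $C(\delta_1)\le\epsilon_1$ once $\epsilon_1$ is chosen smaller than $c(d,K)$. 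Your route---assuming failure for $\epsilon_1=1/k$, rescaling each $u_k$, and using Arzel\`a--Ascoli to produce a limit $v_\infty$ that is simultaneously nonzero at the origin and has vanishing $C$-functional---reaches the same endpoint via compactness. The paper's direct argument is more economical and in principle gives an explicit $\epsilon_1(d,K)$; your compactness argument is equally valid but slightly less constructive. Also note that the paper's final bound $N(\rho_1,d)$ is actually independent of $K$ (it comes out as roughly $4/\rho_1$ from $M\le 2$), whereas your phrasing allows $K$-dependence; this does not affect the correctness of your contradiction, since you assume the supremum is unbounded.
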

\begin{proof}
We prove the theorem by using the Schoen's trick. Let $\delta\in (0,\rho_1^2/4)$ be a number and denote
$$
d(z)=(t_0+\rho_1^2/4-t)^{1/2},\quad
M_{\delta}=\max_{\bar Q(z_0,\rho_1/2)\cap
\{t\le t_0-\delta\}}d(z)|u(z)|.
$$
If for all $\delta\in (0,\rho_1^2/4)$ we have $M_\delta\leq 2$, then
there's nothing to prove. Otherwise, suppose for some $\delta$ and
$z_1\in \bar Q(z_0,\rho_1/2)\cap \{t\le t_0-\delta\}$,
$$
M:=M_\delta=|u(z_1)|d(z_1)>2.
$$
Let $r_1=d(z_1)/M<d(z_1)/2$.
We make the scaling as follows:
$$
\bar u(y,s)=r_1u(r_1^2s+t_1,r_1y+x_1),
$$
$$
\bar p(y,s)=r_1 p(r_1^2 s+t_1,r_1 y+x_1).
$$
The pair $(\bar u,\bar p)$ satisfies \eqref{NSE} in $Q(0,1)$ and $\bar u$ is smooth. Obviously,
\begin{equation}
                            \label{eq11.30.b}
\sup_{Q(0,1)}|\bar u|\leq 2,\quad |\bar u(0,0)|=1.
\end{equation}

By the scaling-invariant property of the quantity $C$, in what follows we view it as the object associated to $(\bar
u,\bar p)$ at the origin. For any $\rho\in (0,1]$, from \eqref{eq18.10.21} we have
\begin{align}
                                    \label{eq19.10.55}
&C(\rho)\leq \epsilon_1,\\
					\label{eq10.52}
&\|\bar p\|_{L^t_\infty L_{d/2}^x(Q(1))}\le K.
\end{align}
We decompose $\bar p$ as in the proof of Lemma \ref{lem2.3}:
$$
\bar p= {\bar p}_{0,1}+\bar h_{0,1}.
$$
Because of \eqref{eq11.30.b}, we have
\begin{equation}
                            \label{eq19.11.13}
\int_{Q(0,1)}| {\bar p}_{0,1}|^{4(d+2)}\,dz\leq N.
\end{equation}
Since $\bar h_{0,1}(t,\cdot)$ is harmonic in $B(2/3)$ for a.e. $t\in (-1,0)$, it holds that
\begin{align}
&\int_{Q(0,1/2)}| \bar h_{0,1}|^{4(d+2)}\,dz\nonumber\\
&\le N\int_{-1/4}^0\sup_{B(1/2)}| \bar h_{0,1}(t,\cdot)|^{4(d+2)}\,dt.\nonumber\\
&\le N\int_{-1/4}^0 \left(\int_{B(2/3)}| \bar h_{0,1}|^{d/2}\,dx\right)^{8(d+2)/d}\,dt\nonumber\\
&\le N\int_{Q(0,1)}| {\bar p}_{0,1}|^{4(d+2)}\,dz+\sup_{t\in (0,1)}\left(\int_{B(0,1)}| \bar p(t,\cdot)|^{d/2}\,dx\right)^{8(d+2)/d}\nonumber\\
                                    \label{eq12.03}
&\le N,
\end{align}
where in the last inequality we used \eqref{eq10.52} and \eqref{eq19.11.13}. Thus, we deduce from \eqref{eq19.11.13} and \eqref{eq12.03} that
\begin{equation}
                        \label{eq11.26}
\int_{Q(0,1/2)}| \bar p|^{4(d+2)}\,dz\leq N.
\end{equation}

Now we note that $(\bar u,\bar p)$ satisfies the equation
$$
\partial_t \bar u-\Delta \bar u=\text{div}(\bar u\otimes \bar u)-\nabla (\bar p)
$$
in $Q(0,1)$. Owing to \eqref{eq11.30.b}, \eqref{eq11.26} and the
classical Sobolev space theory of parabolic equations, we have
\begin{equation*}
\bar u\in W_{4(d+2)}^{1,1/2}(Q(0,1/3)),\quad \|\bar
u\|_{W_{4(d+2)}^{1,1/2}(Q(0,1/3))}\leq N.
\end{equation*}
By the Sobolev embedding theorem (see
\cite{O.A.}), we obtain
$$
\bar u\in C^{1/4}(Q(0,1/4)),\quad \|\bar
u\|_{C^{1/4}(Q(0,1/4))}\leq N,
$$
where $N$ is a universal constant depending only on $d$ and $K$. Therefore, we can find $\delta_1<1/5$ independent of
$\epsilon_1$ such that
\begin{equation}
                                \label{eq19.12.23}
|\bar u(x,t)|\geq 1/2 \quad\text{in}\,\,Q(0,\delta_1).
\end{equation}
Now we choose $\epsilon_1$ small enough which makes
\eqref{eq19.12.23} and \eqref{eq19.10.55} a contradiction. The theorem is proved.
\end{proof}

\mysection{Proof of Theorem \ref{thm1} and \ref{thm2}}			 \label{sec5}

We finish the proof of Theorem \ref{thm1} in this section.
Let $u_k$, $p_k$, $u_\infty$ and $p_\infty$ be the functions constructed in Section \ref{sec3}.
First we verify that the assumptions of Theorem \ref{thm4.1} hold for $(u_k,p_k)$ when $k$ is sufficiently large and the parabolic cylinder is far away from the origin.

\begin{lemma}
                        \label{lem5.1}
For any $\epsilon_2>0$ and $T_1\ge 1$, we can find $R\ge 1$ such that, for any $z_0\in (-T_1-1,0]\times(\bR^d\setminus B_{R+1})$,
\begin{equation}
                    \label{eq9.55}
\limsup_{k\to \infty} C(1,z_0,u_k,p_\infty)\le \epsilon_2.
\end{equation}
\end{lemma}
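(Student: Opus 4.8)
The plan is to pass to the limit in $k$ and thereby reduce the claim to a uniform (in time) decay estimate at spatial infinity for the limiting solution $u_\infty$. Throughout set $a:=2(d+3)/(d+1)$, so that $C(1,z_0,u_k)=\int_{Q(z_0,1)}|u_k|^{a}\,dz$; observe that $1<a<4$ for every $d\ge 3$ and that $a\le d$, with equality only when $d=3$.

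\emph{Step 1 (passing to the limit).} Fix $z_0=(t_0,x_0)$ with $t_0\in(-T_1-1,0]$. By Proposition \ref{prop3.4}(i), $u_k\to u_\infty$ in $C([t_0-1,t_0];L_{q_1}(B(x_0,1)))$, in particular in $L_1(Q(z_0,1))$; moreover the proof of Proposition \ref{prop3.4}, following that of Lemma \ref{lem3.3}, provides $\|u_k\|_{L_4(Q(z_0,3/2))}\le N$ with $N$ independent of $k$. Interpolating the $L_a$ norm of $u_k-u_\infty$ between $L_1$ and $L_4$ (legitimate since $1\le a\le 4$) yields $u_k\to u_\infty$ in $L_a(Q(z_0,1))$, and hence
\begin{equation*}
\limsup_{k\to\infty}C(1,z_0,u_k)=\int_{Q(z_0,1)}|u_\infty|^{a}\,dz=C(1,z_0,u_\infty).
\end{equation*}
It therefore suffices to find $R\ge1$ such that $C(1,z_0,u_\infty)\le\epsilon_2$ whenever $t_0\in(-T_1-1,0]$ and $x_0\notin B_{R+1}$.

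\emph{Step 2 (uniform spatial decay of $u_\infty$).} By Proposition \ref{prop3.4}(ii), $u_\infty\in L_a^tL_d^x((-T_1-2,0)\times\bR^d)$. If $x_0\notin B_{R+1}$ then $B(x_0,1)\subset\{|x|\ge R\}$, and if $t_0\in(-T_1-1,0]$ then $(t_0-1,t_0)\subset(-T_1-2,0)$; hence, by H\"older's inequality in $x$ (using $a\le d$),
\begin{align*}
C(1,z_0,u_\infty)=\int_{Q(z_0,1)}|u_\infty|^{a}\,dz
&\le N\int_{t_0-1}^{t_0}\|u_\infty(t,\cdot)\|_{L_d(B(x_0,1))}^{a}\,dt\\
&\le N\int_{-T_1-2}^{0}\|u_\infty(t,\cdot)\|_{L_d(\{|x|\ge R\})}^{a}\,dt=:N\,\Phi(R),
\end{align*}
and $\Phi(R)$ is independent of $z_0$. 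Since $u_\infty(t,\cdot)\in L_d(\bR^d)$ for a.e. $t$, we have $\|u_\infty(t,\cdot)\|_{L_d(\{|x|\ge R\})}^{a}\to0$ as $R\to\infty$ for a.e. $t$, and this quantity is dominated by $\|u_\infty(t,\cdot)\|_{L_d(\bR^d)}^{a}\in L_1((-T_1-2,0))$; by the dominated convergence theorem $\Phi(R)\to0$ as $R\to\infty$. Choosing $R$ with $N\Phi(R)\le\epsilon_2$ then gives $\limsup_{k\to\infty}C(1,z_0,u_k)=C(1,z_0,u_\infty)\le\epsilon_2$ for every admissible $z_0$, which is \eqref{eq9.55}.

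The main obstacle is Step 2: obtaining a bound on $C(1,z_0,u_\infty)$ that is uniform in the time center $t_0$ as $|x_0|\to\infty$. This is precisely where the global-in-space control $u_\infty\in L_a^tL_d^x$ on finite time slabs---rather than only the local energy bounds of Lemma \ref{lem2.1}---is indispensable: it upgrades the pointwise-in-$t$ statement ``$u_\infty(t,\cdot)\in L_d(\bR^d)$'' into a tail estimate uniform in $t$ via dominated convergence. Step 1 is routine, since Proposition \ref{prop3.4} already supplies both the local convergence and the uniform local $L_4$ bound needed for the interpolation.
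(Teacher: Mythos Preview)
Your proof is correct and follows essentially the same approach as the paper: pass to the limit using the local convergence from Proposition~\ref{prop3.4}(i) (with an interpolation against the uniform $L_4$ bound to reach the exponent $a=2(d+3)/(d+1)$), and then control $C(1,z_0,u_\infty)$ uniformly in $z_0$ by the decay at spatial infinity of the $L_d$ norm of $u_\infty$ on the finite time slab, which is exactly what Proposition~\ref{prop3.4}(ii) provides. The paper phrases Step~2 slightly more directly---it works with the space--time $L_d$ integral $\int_{(-T_1-2,0)\times(\bR^d\setminus B_R)}|u_\infty|^d\,dz$ and a single H\"older inequality rather than H\"older in $x$ followed by dominated convergence in $t$---but the two arguments are equivalent.
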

\begin{proof}
Due to Proposition \ref{prop3.4} ii), we can find $R$  large such that
$$
\int_{(-T_1-2,0)\times(\bR^d\setminus B_R)}|u_\infty|^d\,dz
$$
is sufficiently small. This together with Proposition \ref{prop3.4} i) proves the lemma.
\end{proof}

\begin{lemma}
 			\label{lem5.2}
For any $\epsilon_3>0$ and $T_1\ge 1$, we can find $R\ge 1$ and $\rho_3\in (0,1/2]$ such that, for any $\rho\in (0,\rho_3]$ and $z_0\in (-T_1-1,0]\times(\bR^d\setminus B_{R+2})$,
\begin{equation}
                    \label{eq18.10.29}
\limsup_{k\to \infty}\left(C(\rho,z_0,u_k,p_k)+D(\rho,z_0,u_k,p_k)\right)
\le \epsilon_3.
\end{equation}
\end{lemma}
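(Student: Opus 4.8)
We plan to deduce the lemma from Lemma \ref{lem5.1}, a uniform (but not small) bound on the pressure quantity, the decay estimate of Lemma \ref{lem2.3}, and the propagation property of Proposition \ref{prop3.1}. First we record that $D(1,z_0,u_k,p_k)$ is bounded by a constant $N_0=N_0(d,K)$, uniformly in $k$ and in the base point: by the pressure equation \eqref{pressure} and the Calder\'on--Zygmund estimate, $\|p_k\|_{L_\infty^tL_{d/2}^x}\le N\|u_k\|_{L_\infty^tL_d^x}^2\le NK^2$ (the last step using the scaling invariance of the $L_\infty^tL_d^x$ norm), and since $(d+3)/(d+1)\le d/2$, H\"older's inequality on $B(x_0,1)$ followed by integration in $t$ over an interval of length one gives $D(1,z_0,u_k,p_k)\le N\|p_k\|_{L_\infty^tL_{d/2}^x}^{(d+3)/(d+1)}\le N_0$. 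On the other hand, applying Lemma \ref{lem2.3} to $(u_k,p_k)$ with $\rho=1$ and a parameter $\gamma\in(0,1/4]$ to be chosen, together with the trivial monotonicity $C(\gamma,z_0,u_k)\le\gamma^{-(d-4/(d+1))}C(1,z_0,u_k)$, we obtain
\[
C(\gamma,z_0,u_k)+D(\gamma,z_0,u_k,p_k)\le N\gamma^{-(d-4/(d+1))}\,C(1,z_0,u_k)+N\gamma^{4/(d+1)}N_0 .
\]

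The order of the quantifiers is the crux. Fix the constant $\epsilon^*=\epsilon^*(\epsilon_3,d)$ furnished by Proposition \ref{prop3.1} with $\epsilon_0=\epsilon_3$. First choose $\gamma\in(0,1/4]$ so small that $N\gamma^{4/(d+1)}N_0\le\epsilon^*/2$; this depends only on $d$, $K$, $\epsilon_3$. With $\gamma$ now frozen, apply Lemma \ref{lem5.1} with a small parameter $\epsilon_2$, chosen (in terms of $\gamma$, hence of $\epsilon_3,d,K$) so that $N\gamma^{-(d-4/(d+1))}\epsilon_2\le\epsilon^*/2$, and with $T_1$ enlarged to $T_1+1$; this produces an $R_0\ge1$ such that $\limsup_{k\to\infty}C(1,z_0,u_k)\le\epsilon_2$ for every base point $z_0$ in a slightly thickened region, say $(-T_1-1-\gamma^2,\gamma^2]\times(\bR^d\setminus B_{R_0+1})$. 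Taking $\limsup_{k\to\infty}$ in the displayed inequality then gives $\limsup_{k\to\infty}\big(C(\gamma,z_0,u_k,p_k)+D(\gamma,z_0,u_k,p_k)\big)\le\epsilon^*$ for all such $z_0$.

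Finally, for $k$ large the cylinder $Q(z_0,\gamma)$ lies in the time interval on which $(u_k,p_k)$ is a suitable weak solution and $\|u_k\|_{L_\infty^tL_d^x}\le K$ holds, so Proposition \ref{prop3.1} applies to $(u_k,p_k)$: since $C(\gamma,z_0,u_k,p_k)+D(\gamma,z_0,u_k,p_k)\le\epsilon^*$ for $k$ large, we get $C(r,z_1,u_k,p_k)+D(r,z_1,u_k,p_k)\le\epsilon_3$ for all $z_1\in Q(z_0,\gamma/2)$ and $r\in(0,\gamma/2)$. Since every point of $(-T_1-1,0]\times(\bR^d\setminus B_{R_0+2})$ lies in $Q(z_0,\gamma/2)$ for some base point $z_0$ in the thickened region above (the minor endpoint case $t_0=0$ being handled by a limiting argument, or subsumed in the case where the blow-up time is not the final time), setting $R:=R_0+2$ and $\rho_3:=\gamma/4$ and taking $\limsup_{k\to\infty}$ yields \eqref{eq18.10.29}.

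The main obstacle is the pressure. Unlike $C(1,\cdot,u_k)$, which Lemma \ref{lem5.1} makes small on far-away cylinders, $D(1,\cdot,u_k,p_k)$ is only controlled by the fixed constant $N_0(d,K)$; the device is that Lemma \ref{lem2.3} trades this fixed bound for an arbitrarily small contribution at the smaller scale $\gamma$ through the gain factor $\gamma^{4/(d+1)}$. This is precisely what forces $\gamma$ to be chosen before the smallness parameter $\epsilon_2$ of Lemma \ref{lem5.1}, and keeping that dependence straight — together with the routine covering bookkeeping — is the only delicate point.
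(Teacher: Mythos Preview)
Your proof is correct and follows the same route as the paper's: uniform bound on $D(1,\cdot,u_k,p_k)$, then Lemma~\ref{lem2.3} to make the pressure contribution small at scale $\gamma$ before invoking Lemma~\ref{lem5.1} for the velocity, and finally Proposition~\ref{prop3.1} to propagate. Your explicit discussion of the order of choices ($\gamma$ first, then $\epsilon_2$) is exactly the point; the only superfluous step is the final covering argument---since Proposition~\ref{prop3.1} already yields the conclusion at $z_1=z_0$ for every $r\in(0,\gamma/2)$, you may simply take $\rho_3=\gamma/2$ without thickening the time interval or shifting base points.
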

\begin{proof}
The lemma is a consequence of Lemma \ref{lem5.1}, \ref{lem2.3} and Proposition \ref{prop3.1}. Indeed, since $D(1,z_0,u_k,p_k)$ has a uniform bound, for any $\epsilon>0$, we can choose $\gamma$ small in \eqref{eq11.29}, then $\epsilon_2$ small in \eqref{eq9.55} and $R$ large such that
\begin{equation*}
\limsup_{k\to \infty}\left(C(\gamma,z_0,u_k,p_k)+D(\gamma,z_0,u_k,p_k)\right)
\le \epsilon
\end{equation*}
holds for any $z_0\in (-T_1-1,0]\times(\bR^d\setminus B_{R+1})$. Now it suffices to choose $\epsilon$ small depending on $\epsilon_3$ and apply Proposition \ref{prop3.1}. We finish the proof by setting $\rho_3=\gamma/2$.
\end{proof}

Next we show that $u_\infty$ is identically equal to zero.

\begin{prop}
				\label{prop5.3}
Under the assumptions of Theorem \ref{thm1}, let $(u_\infty,p_\infty)$ be the suitable weak solution constructed in Section \ref{sec3}. Then,
$$
u_\infty(t,\cdot)\equiv 0\quad \forall\,\,t\in(-\infty,0).
$$
\end{prop}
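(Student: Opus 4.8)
The plan is to carry out the blow-up contradiction announced in the introduction: first make $u_\infty$ bounded, hence smooth, outside a large space-time cylinder; then observe that $u_\infty$ vanishes at the terminal time $t=0$; then annihilate the vorticity outside the cylinder via the backward uniqueness theorem of \cite{ESS}; and finally propagate the vanishing to all of $\bR^d\times(-\infty,0)$ using spatial analyticity. Throughout I fix $T_1\ge1$ and aim at $u_\infty(t,\cdot)\equiv0$ for $t\in(-T_1,0)$; letting $T_1\to\infty$ then finishes the proof.

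\emph{Step 1: $u_\infty$ is regular outside a large cylinder.} Since the $L^t_\infty L^x_{d/2}$ norm of $p_k$ is scaling invariant it is bounded by some $K_1=K_1(K,d)$ (Lemma \ref{lem3.3}); let $\epsilon_1=\epsilon_1(d,K_1)$ be the constant of Theorem \ref{thm4.1}. Applying Lemma \ref{lem5.2} with $\epsilon_3=\epsilon_1$ (and $T_1$ slightly enlarged to leave room) produces $R\ge1$ and $\rho_3\in(0,1/2]$ such that, for $k$ large, the hypotheses \eqref{eq18.10.21} hold for $(u_k,p_k)$ at every point $z_0$ of the slab $(-T_1-1,0]\times(\bR^d\setminus B_{R+3})$ with $\rho_1=\rho_3$. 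Here one uses that $u_k$ is smooth on $Q(z_0,\rho_3)$ for $k$ large (because $T_0$ is the first blow-up time of $u$), the convergence in Proposition \ref{prop3.4} i) to pass from the $\limsup$ in Lemma \ref{lem5.2} to a uniform bound, and a covering argument to make the threshold on $k$ uniform. Theorem \ref{thm4.1} then gives $\sup_{Q(z_0,\rho_3/4)}|u_k|\le N(d,\rho_3,K_1)$ uniformly in $k$ and $z_0$, and passing to the limit yields $u_\infty\in L_\infty$ on $(-T_1,0)\times(\bR^d\setminus B_{R^*})$ for a suitable $R^*$. Interior regularity for the Stokes system (the pressure being already controlled) then makes $u_\infty$ smooth there, with all spatial derivatives locally bounded up to $t=0$.

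\emph{Steps 2 and 3: vanishing at $t=0$ and backward uniqueness.} By Lemma \ref{lem3.3}, $u(T_0,\cdot)\in L_d(\bR^d)$, so for every $r>0$
\begin{equation*}
\|u_k(0,\cdot)\|_{L_d(B_r)}=\|u(T_0,\cdot)\|_{L_d(B(X_0,\lambda_k r))}\to0\qquad(k\to\infty)
\end{equation*}
by absolute continuity of the integral; together with the convergence $u_k(0,\cdot)\to u_\infty(0,\cdot)$ in $L_{q_1}(B_r)$ this forces $u_\infty(0,\cdot)\equiv0$. The vorticity $\omega_\infty$ (the antisymmetric part of $\nabla u_\infty$, i.e.\ $\nabla\times u_\infty$ when $d=3$) satisfies, on $(-T_1,0)\times(\bR^d\setminus B_{2R^*})$, the differential inequality $|\partial_t\omega_\infty-\Delta\omega_\infty|\le C(|\omega_\infty|+|\nabla\omega_\infty|)$ with $C$ depending only on $\sup(|u_\infty|+|\nabla u_\infty|)$ over that region, since the pressure drops out of the vorticity equation; moreover $\omega_\infty$ and its derivatives are bounded there and $\omega_\infty(\cdot,0)\equiv0$. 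Since every half-space $\{x\cdot e>a\}$ with $a\ge2R^*$ is contained in $\bR^d\setminus B_{2R^*}$, applying the backward uniqueness theorem of \cite{ESS} on such half-spaces (terminal data zero, growth hypothesis trivially satisfied) gives $\omega_\infty\equiv0$ on each of them, hence $\omega_\infty\equiv0$ on $(-T_1,0)\times(\bR^d\setminus B_{2R^*})$.

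\emph{Step 4: propagation inward and conclusion.} For a.e.\ $s\in(-T_1,0)$ we have $u_\infty(s,\cdot)\in L_d(\bR^d)$ (Proposition \ref{prop3.4} ii)); by Proposition \ref{prop3.53} there is a spatially analytic strong solution $v$ on $[s,s+\delta)$ with $v(s,\cdot)=u_\infty(s,\cdot)$, and by the weak-strong uniqueness (valid here since $u_\infty\in L^t_\infty L^x_d$) one has $u_\infty=v$ on $(s,\min(s+\delta,0))$, so $u_\infty(t,\cdot)$ is real-analytic on $\bR^d$ for those $t$. For such $t$, $\omega_\infty(t,\cdot)$ is real-analytic on $\bR^d$ and vanishes on the open set $\bR^d\setminus B_{2R^*}$, hence $\omega_\infty(t,\cdot)\equiv0$ on $\bR^d$; then $u_\infty(t,\cdot)$ is curl-free and divergence-free, so each component is harmonic, and since $u_\infty(t,\cdot)\in L_d(\bR^d)$ the mean value property forces $u_\infty(t,\cdot)\equiv0$. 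Choosing one such $t_1$ and using that the zero field is the unique solution in this class with zero initial data, $u_\infty\equiv0$ on $(t_1,0)$; letting $t_1\downarrow-T_1$ gives $u_\infty\equiv0$ on $(-T_1,0)$, and finally $T_1\to\infty$ completes the proof. The steps I expect to be delicate are Step 1 --- making the scaling-invariant estimates genuinely uniform in $k$ so that Theorem \ref{thm4.1} applies along the whole sequence and survives the passage to the limit --- and the verification of the hypotheses of the backward uniqueness theorem of \cite{ESS}, in particular that $u_\infty$ and $\nabla u_\infty$ are bounded up to the terminal time $t=0$ (so the coefficients of the vorticity inequality are admissible) and that $\omega_\infty$ and $\nabla\omega_\infty$ meet the growth condition on each half-space.
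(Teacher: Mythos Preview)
Your plan is correct and follows essentially the same route as the paper's proof: Lemma \ref{lem5.2} together with Theorem \ref{thm4.1} to bound $u_k$ (hence $u_\infty$) outside a large cylinder, Stokes regularity for higher derivatives, the $L_d$ absolute-continuity argument to get $u_\infty(0,\cdot)=0$, the backward uniqueness of \cite{ESS} on half-spaces to kill the vorticity outside the cylinder, and then spatial analyticity via Proposition \ref{prop3.53} and weak--strong uniqueness to propagate the vanishing inward. The only cosmetic difference is in the final propagation: the paper fixes an arbitrary $t_0\in(-T_1,0)$, takes $t_k\uparrow t_0$, obtains $u_\infty\equiv 0$ on each short interval $(t_k,t_k+\delta_k)$, and then invokes weak continuity of $u_\infty$ in $t$ to conclude at $t_0$; you instead pick a single $t_1$ close to $-T_1$ where $u_\infty(t_1,\cdot)=0$ and push forward by uniqueness of the zero solution. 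Both arguments are valid and rest on the same ingredients.
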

\begin{proof}
Let $\epsilon_1$ be the constant in Theorem \ref{thm4.1}. Let $T_1\ge 1$ be a number. Owing to lemma \ref{lem5.2}, we can find $R\ge 1$ and $\rho_3\in (0,1/2]$ such that, for any $\rho\in (0,\rho_3]$ and $z_0\in (-T_1-1,0]\times(\bR^d\setminus B_{R+2})$ estimate \eqref{eq18.10.29} holds with $\epsilon_1/2$ in place of $\epsilon_3$. Moreover, we recall that for each $K$
$$
\|p_k\|_{L^t_\infty L^x_{d/2}((-\infty,0)\times \bR^d)}\leq N(d)K.
$$
Thus Theorem \ref{thm4.1} yields that
$$
\limsup_{k\to \infty}\sup_{Q(z_0,\rho_3/4)}|u_k(z_0)|\le N(d,\rho_3)
$$
for any $z_0\in [-T_1-1,0)\times(\bR^d\setminus B_{R+2})$.
Now by Proposition \ref{prop3.4}, we obtain
$$
|u_\infty(z)|\le N(d,\rho_3)
$$
for a.e. $z\in [-T_1-1,0)\times(\bR^d\setminus B_{R+2})$. Upon using the regularity results for linear Stokes systems, one can estimate higher derivatives
\begin{equation}
 				\label{eq4.44}
|D^j u_\infty(z)|\le N(d,j,\rho_3)
\end{equation}
for any $j\ge 1$ and a.e. $z\in [-T_1,0)\times(\bR^d\setminus B_{R+3})$.

We now claim that $u_\infty(0,\cdot)\equiv 0$ by adapting the argument in the proof of Theorem 1.4 \cite{ESS}. For any $x_0\in \bR^d$, by using \eqref{eq8.50},
\begin{align*}
 &\int_{B(x_0,1)}|u_\infty(x,0)|\,dx\\
&\le \int_{B(x_0,1)}|u_k(x,0)-u_\infty(x,0)|\,dx
+\int_{B(x_0,1)}|u_k(x,0)|\,dx\\
&\le \|u_k-u\|_{C([-1,1];L_1(B(x_0,1)))}
+N(d)\left(\int_{B(x_0,1)}|u_k(x,0)|^d\,dx\right)^{1/d}\\
&=\|u_k-u\|_{C([-1,1];L_1(B(x_0,1)))}
+N(d)\left(\int_{B(\lambda_k x_0,\lambda_k)}|u(y,0)|^d\,dy\right)^{1/d}.
\end{align*}
The right-hand side of the above inequality goes to zero as $k\to \infty$, which proves the claim.

Because of \eqref{eq4.44}, the vorticity $\omega=\text{curl}\, u_\infty$ satisfies the differential inequality
\begin{equation*}
|\partial_t\omega-\Delta\omega|
\le N(|\omega|+|\nabla\omega|)
\end{equation*}
on $(-T_1,0]\times(\bR^d\setminus B_{R+3})$. Thanks to the backward uniqueness theorem proved in \cite{ESS} (see also \cite{ESS2}), we reach
\begin{equation}
 				\label{eq18.4.49}
\omega(z)=0\quad\text{on}\,\,(-T_1,0]\times(\bR^d\setminus B_{R+3}).
\end{equation}

Now we fix a $t_0\in (-T_1,0)$. Take a increasing sequence $\{t_k\}\subset (-T_1,0)$ converging to $t_0$. For each $k$, we consider equation \eqref{NSE} with initial data $u_\infty(t_k,\cdot)$. By Proposition \ref{prop3.53}, one can locally find a strong solution
$$
v_k\in C([t_k,t_k+\delta_k);L_d(\bR^d)).
$$
for some small $\delta_k$, and $v_k(t,\cdot)$ is spatial analytic for $t\in (t_k,t_k+\delta_k)$. By the weak-strong uniqueness, $v_k\equiv u_\infty$ for $t\in [t_k,t_k+\delta_k)$. Therefore, $\omega(t,\cdot)$ is also spatial analytic for $t\in (t_k,t_k+\delta_k)$. Because of \eqref{eq18.4.49}, we get
\begin{equation*}
\omega(z)=0\quad\text{on}\,\,(t_k,t_k+\delta_k)\times\bR^d,
\end{equation*}
which implies that $u_\infty\equiv 0$ in the same region. In particular, there exists a sequence $\{s_k\}$ converging to $t_0$ such that
$$
t_k< s_k\le t_0,\quad u_\infty(s_k,\cdot)\equiv 0.
$$
This together with the weak continuity of $u_\infty$ yields that $u_\infty(t_0,\cdot)\equiv 0$. Since $t_0\in (-T_1,0)$ is arbitrary and $T_1\ge 1$ is also arbitrary, we complete the proof of the theorem.
\end{proof}

We are ready to prove Theorem \ref{thm1}.

\begin{proof}[Proof of Theorem \ref{thm1}] We prove the theorem in three steps.

{\em Step 1.} First we show that $u$ is regular for $t\in (0,T]$. Thanks to Proposition \ref{prop3.4} and \ref{prop5.3},
$$
u_k\to 0\,\,\text{in}\,\, C([-3,0];L_{2(d+3)/(d+1)}(B(3))).
$$
Also recall that $D(1,z_0,u_k,p_k)$ has a uniform bound.
Following the proof of Lemma \ref{lem5.2} we have: for any $\epsilon_4>0$, there is a $\rho_4\in (0,1/2]$ and a positive integer $k_0$ such that, for any $\rho\in (0,\rho_3]$ and $z_0\in (-2,0]\times B(2)$,
\begin{equation*}
C(\rho,z_0,u_{k_0},p_{k_0})+D(\rho,z_0,u_{k_0},p_{k_0})
\le \epsilon_4.
\end{equation*}
We choose $\epsilon_4$ sufficiently small and apply Theorem \ref{thm4.1} to get
\begin{equation*}
\sup_{(-1,0)\times B(1)}|u_{k_0}|<\infty,
\end{equation*}
which implies that
\begin{equation*}
\sup_{Q(Z_0,\lambda_{k_0})}|u|<\infty.
\end{equation*}
This contradicts the assumption that $(T_0,X_0)$ is a blowup point. Therefore, $u$ is regular for $t\in (0,T]$.

{\em Step 2.} We bound the sup norm of $u$ in this step. Fix a $\delta\in (0,T)$. Since
$$
\|u\|_{L^t_\infty L^x_{d}((0,T)\times \bR^d)}\leq N,\quad
\|p\|_{L^t_\infty L^x_{d/2}((0,T)\times \bR^d)}\leq N,
$$
by the same reasoning as at the beginning of the proof of Proposition \ref{prop5.3}, we see that there exists a large $R\ge 1$ such that
\begin{equation}
                                \label{eq23.4.27}
\sup_{[\delta,T)\times (\bR^d\setminus B(R))}|u|\le N.
\end{equation}
Next we estimate the sup norm of $u$ in $[\delta,T)\times B(R)$. Fix a $z_0=(t_0,x_0)$ in $[\delta,T]\times \bar B(R)$. In the construction of $u_k$, we replace $(T_0,X_0)$ by $(t_0,x_0)$. By the same reasoning as in the first step, for some $\epsilon=\epsilon(T_0,X_0)>0$, we have
\begin{equation*}
\sup_{Q(z_0,\epsilon)}|u|<\infty.
\end{equation*}
By the compactness of $[\delta,T]\times \bar B(R)$, it holds that
\begin{equation*}
\sup_{[\delta,T)\times \bar B(R)}|u|\le N.
\end{equation*}
This together with \eqref{eq23.4.27} yields
\begin{equation*}
\sup_{[\delta,T)\times \bR^d}|u|\le N.
\end{equation*}

{\em Step 3.} Finally we prove the uniqueness. Owing to the local strong solvability of \eqref{NSE}, we have $u\in L_{d+2}(\bR^{d+1}_{T_1})$ for some $T_1\in (0,T)$. On the other hand, for $t\in [T_1,T]$ the solution is uniformly bounded and belongs to $L_\infty^tL_d^x((T_1,T)\times \bR^d)$, thus $u\in L_{d+2}(\bR^{d+1}_{T})$. The uniqueness then follows.
\end{proof}

Now we give
\begin{proof}[Proof of Theorem \ref{thm2}] Thanks to Theorem \ref{thm1}, it remains to prove \eqref{eq10.22}. 
Let $\lambda>0$ be a constant to be specified later. We define
\begin{align*}
u_\lambda(t,x)&=\lambda u(\lambda^2t,\lambda x),\\
p_\lambda(t,x)&=\lambda^2 p(\lambda^2t,\lambda x).
\end{align*}
Then $(u_\lambda,p_\lambda)$ is also a Leray-Hopf weak solution of \eqref{NSE} in $(0,\infty)\times\bR^d$, and $u_\lambda$ satisfies \eqref{eq11.11b} with the same constant $K$ due to the scaling invariant property.

By the proof of Lemma \ref{lem3.3}, we have $u_\lambda\in L_4((0,\infty)\times \bR^d)$. Thus for any $\epsilon>0$, there is a $T>0$ such that $\|u_\lambda\|_{L_4((T,\infty)\times \bR^d)}\le \epsilon$. Let $\epsilon_1$ be the constant in Theorem \ref{thm4.1}. Upon using Lemma \ref{lem2.3} and Proposition \ref{prop3.1}, we can find a large $T=T_\lambda$ such that
$$
C(\rho,z_0,u_\lambda,p_\lambda)+D(\rho,z_0,u_\lambda,p_\lambda)\le \epsilon_1,
$$
for any $\rho\in (0,1/2]$ and $z_0\in [T,\infty)\times \bR^d$. Owning to Theorem \ref{thm4.1}, we conclude
$$
\sup_{Q(z_0,1/4)}|u_\lambda(z)|<N,
$$
where $N=N(d,K)$ is independent of $\lambda$. Therefore,
$$
\sup_{t\ge\lambda^2 T,x\in \bR^d}|u(t,x)|<N/\lambda.
$$
Sending $\lambda\to \infty$ yields the desired result. The theorem is proved.
\end{proof}

\section*{Acknowledgment}
The authors would like to express their sincere gratitude to Vladmir \v Sver\'ak for very helpful comments and suggestions. The authors are also grateful to Gabriel Koch and the referee for useful comments on a previous version of the manuscript.

\end{document}